\renewcommand{\geq}{\geqslant}
\renewcommand{\leq}{\leqslant}
\renewcommand{\epsilon}{\varepsilon}
\definecolor{darkgreen}{rgb}{0,0.4,0}
\definecolor{MyDarkBlue}{rgb}{0,0.08,0.85}
\definecolor{BrickRed}{rgb}{0.8,0.08,0}
\colorlet{darkgreen}{green!50!black}
\newtheoremstyle{sans}{\parskip}{\parskip}{\itshape}
                       {0pt}{\bfseries\sffamily}{.}{ }{}
\newtheoremstyle{sansplain}{\parskip}{\parskip}{}
                       {0pt}{\bfseries\sffamily}{.}{ }{}
\theoremstyle{sans}
\newtheorem{prop}{Proposition}[section]
\newtheorem{thm}[prop]{Theorem}
\newtheorem{lem}[prop]{Lemma}
\theoremstyle{sansplain}
\newtheorem{rem}[prop]{Remark}
\numberwithin{equation}{section}
\numberwithin{figure}{section}
\def\egaldef{\stackrel{\textnormal{\tiny def}}{=}}
\newcommand\Cc{\mathbb{C}}
\newcommand{\dd}{\mathrm{d}}
\newcommand{\Rr}{\mathbb{R}}
\let\phi=\varphi
\let\epsilon=\varepsilon
\def\DD{\displaystyle}
\def\H{\mathcal{H}}
\def\F{\mathcal{F}}
\renewcommand{\hat}[1]{\widehat{#1}}
\begin{document}

\title[Reflected Brownian motion in a $3/4$-plane]{Stationary Brownian motion in a $3/4$-plane: Reduction to a Riemann-Hilbert problem via Fourier transforms}

\date{\today}

\author{Guy Fayolle}

\address{Inria Paris, 2 rue Simone Iff, CS 42112, 75589 Paris Cedex 12 France \newline\indent
Inria Paris-Saclay, 1 rue Honor\'e d'Estienne d’Orves, 91120 Palaiseau, France} 
\email{guy.fayolle@inria.fr}

\author{Sandro Franceschi}
       
        \address{T\'el\'ecom SudParis, Institut Polytechnique de Paris, 19 place Marguerite Perey, 91120 Palaiseau, France
        } \email{sandro.franceschi@telecom-sudparis.eu}

\author{Kilian Raschel}
       
        \address{Universit\'e d'Angers, Laboratoire Angevin de Recherche en Math\'ematiques, CNRS, SFR MATHSTIC, 2~Boulevard Lavoisier, 49000 Angers, France} \email{raschel@math.cnrs.fr}
        \thanks{This project has received funding from the European Research Council (ERC) under the European Union's Horizon 2020 research and innovation programme under the Grant Agreement No.\ 759702 and from Centre Henri Lebesgue, programme ANR-11-LABX-0020-0.}
        
\keywords{Obliquely reflected Brownian motion in a wedge; non-convex cone; stationary distribution; Laplace transform; boundary value problem}

\subjclass[2010]{Primary 60J65, 60E10; Secondary 60H05}

\begin{abstract}
The stationary reflected Brownian motion in a three-quarter plane has been rarely analyzed in the probabilistic literature, in comparison with the quarter plane analogue model. In this context, our main result is to prove that the stationary distribution can indeed be found by solving a boundary value problem of the same kind as the one encountered  in the quarter plane, up to various dualities and symmetries. The main idea is to start from Fourier (and not Laplace) transforms, allowing to get a functional equation for a single function of two complex variables.
\end{abstract}
\maketitle

\section*{Acknowledgments} We would like to thank the organizers for the invitation to participate in this special issue dedicated to the memory of J.~W.~Cohen.  His important works in queueing theory, combining probabilistic and analytic methods, influenced many researchers around the world in the 1970's. The first author of this paper is particularly indebted to J.~W.~Cohen and O.~Boxma for the great interest they showed in the original study~\cite{FaIa-1979}, some 45 years ago. We further thank the two anonymous referees for very their careful reading and useful remarks.

\section{Introduction} \label{sec:RBM}

\subsection{Reflected Brownian motion in a three-quarter plane}
Since the introduction of reflected Brownian motion in wedges, in the eighties \cite{HaRe-81a,HaRe-81b,VaWi-85,Wi-85}, the probabilistic community has shown a constant interest in this topic. Typical questions deal with the recurrence of the process, the absorption at the vertex of the wedge, the existence of stationary distributions, its computation.  A related problem was solved in \cite{BaFa-87}, for a diffusion process in the positive quarter plane. For more details, we refer to the introductions of \cite{FrRa-19} and \cite{FaFrRa-21}. A precise semimartingale definition of reflected Brownian motion will be given in Section~\ref{sec:model}. In this work, we will restrict ourselves to the stationary case.

Generally speaking, an obliquely reflected Brownian motion in a two-dimensional wedge of opening $\zeta\in (0, 2\pi)$ is characterized by its drift $\mu\in\mathbb R^2$ and two reflection angles $(\delta,\varepsilon)\in(0, \pi)^2$, see Figures \ref{fig:intro_scaling} and \ref{fig:three_quarter_plane} for examples. The covariance matrix is taken to be the identity. A suitable linear transform allows to reduce the whole range of parameter angles $\zeta\in (0, 2\pi)$ to three cases: the quarter plane ($\zeta\in (0,\pi)$), the three-quarter plane (when $\zeta\in (\pi,2\pi)$) and the limiting half-plane case $\zeta=\pi$. Doing so, the covariance matrix is no longer the identity matrix but has a general form \eqref{eq:covariance_matrix}.

While the early articles \cite{VaWi-85,Wi-85} most dealt with the general case $\zeta\in (0, 2\pi)$, the case of convex cones $\zeta\in (0, \pi]$ has attracted much more attention \cite{HaRe-81a,HaRe-81b,BaFa-87,DaMi-11,FrRa-19,BoElFrHaRa-21}. However, as explained in the introduction of \cite{FaFrRa-21}, there are numerous reasons to look at the non-convex, three-quarter plane situation. Our particular motivation is provided by the discrete framework of random walks (or queueing networks). Indeed, in the same way as in the quarter plane, reflected Brownian motion has been introduced to study scaling limits of large queueing networks (see Figure~\ref{fig:intro_scaling}), a Brownian model in a non-convex cone could approximate discrete random walks on a wedge having obtuse angle (see Figure~\ref{fig:intro_scaling} for an example). Such random walks have an intrinsic interest and have already been studied in a number of cases: see \cite{BM-16,RaTr-19,EP-22,Bo-22} in the combinatorial literature and \cite{Tr-22,Mu-19} for more probabilistic works. 

\begin{figure}[hbtp]
\centering
\includegraphics[scale=0.5]{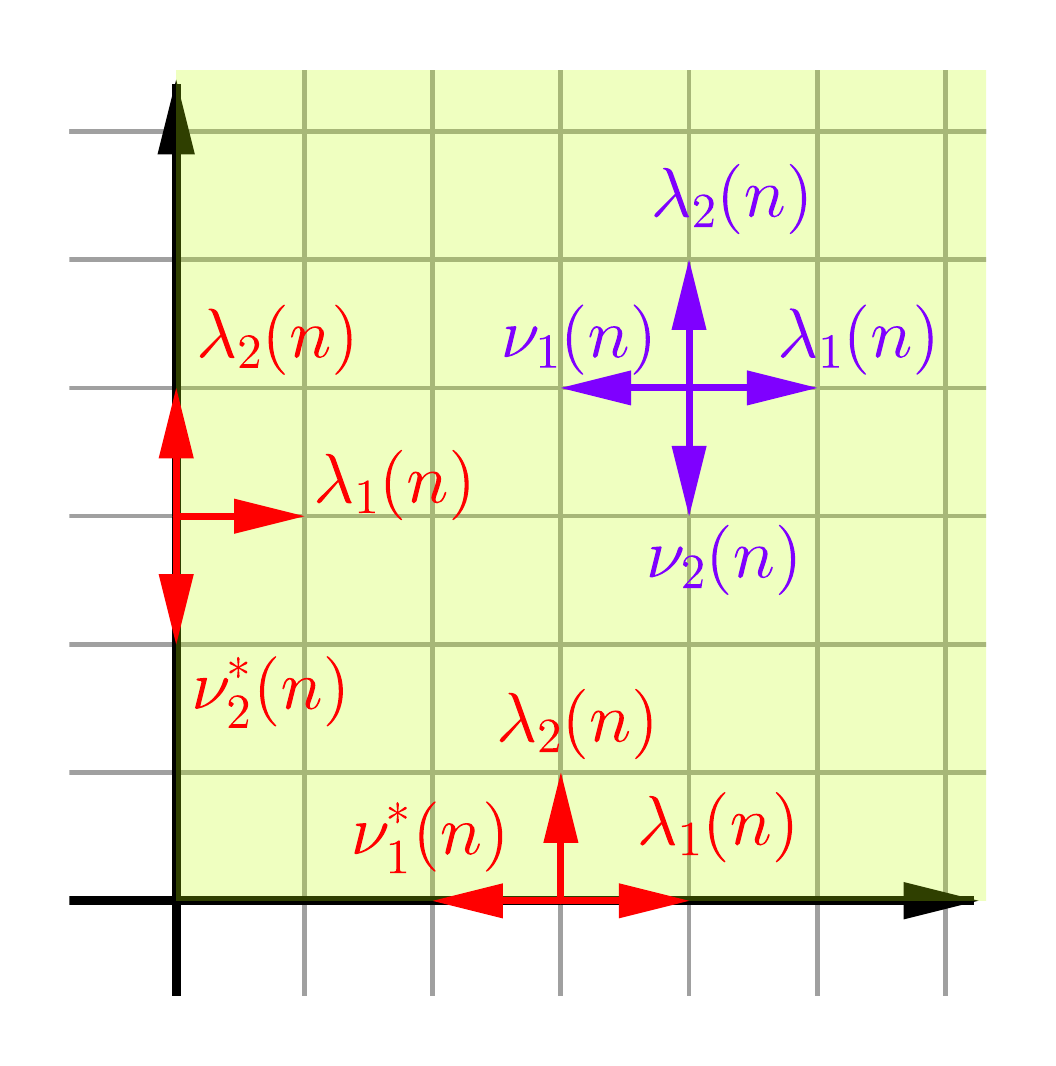}
\includegraphics[scale=0.5]{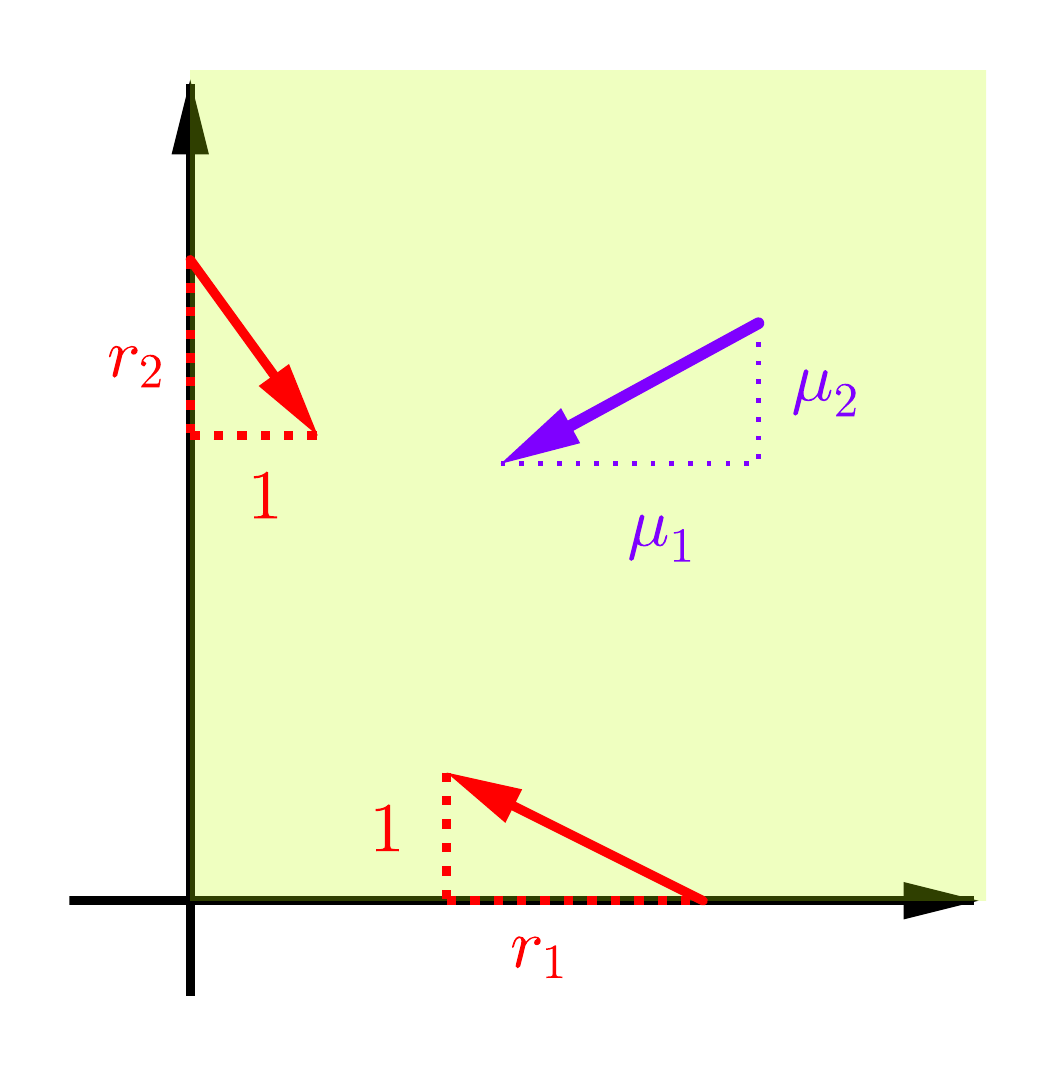}

\includegraphics[scale=0.5]{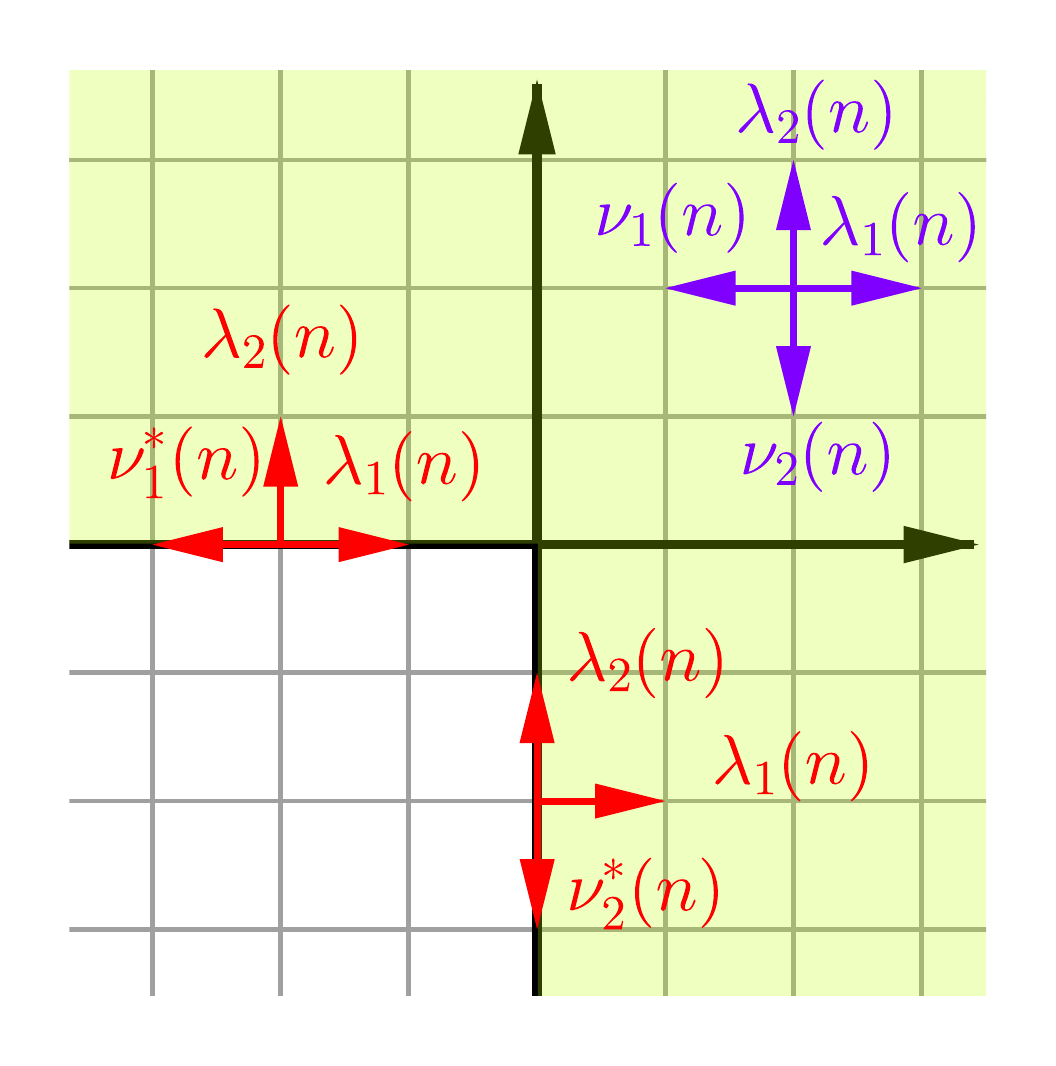}
\includegraphics[scale=0.5]{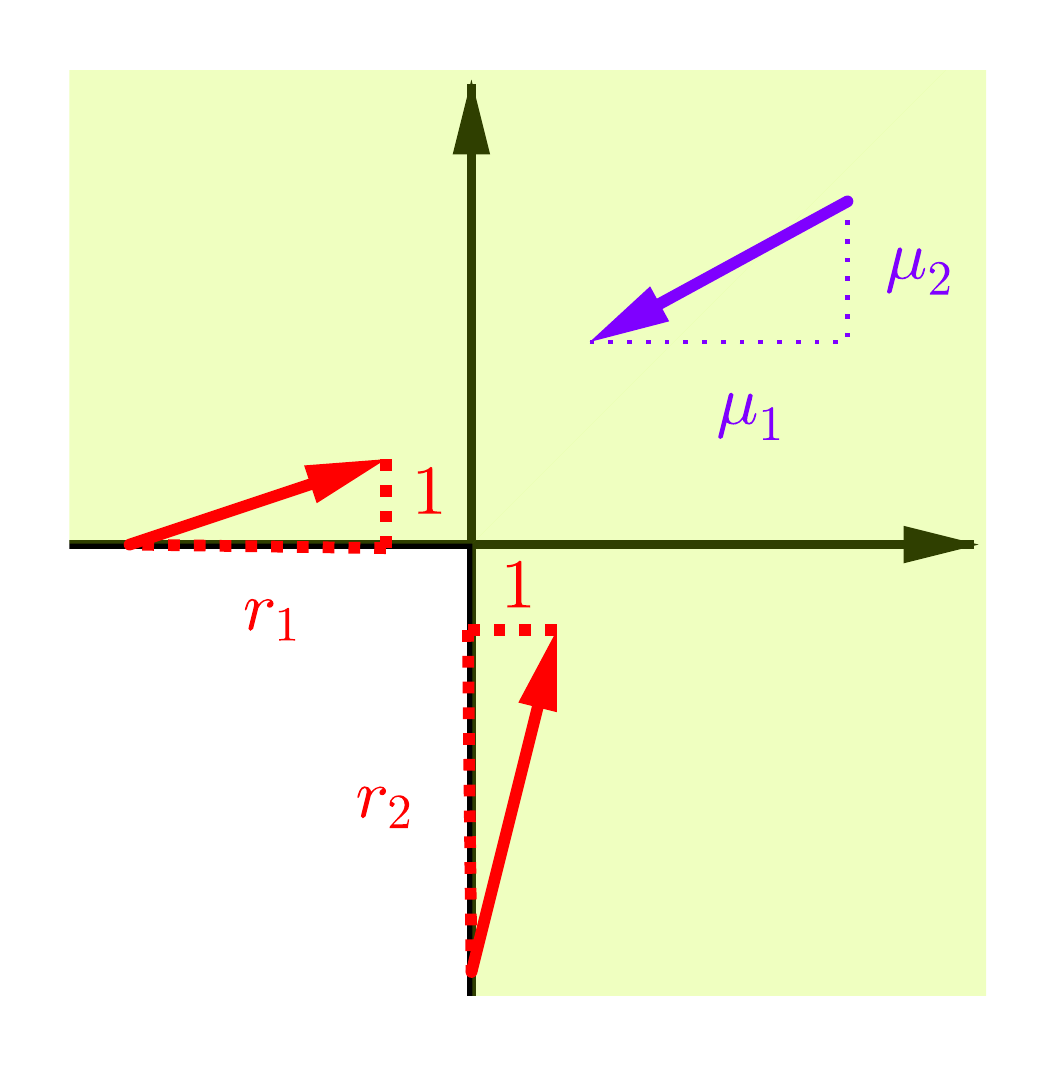}
\caption{Scaling limit of some queueing systems towards reflected Brownian motion. Top left picture: transition rates of a random walk (two coupled processors). Taking $\lambda_{i}(n),\nu_i(n)\to\frac{1}{2}$, $\sqrt{n}(\lambda_i(n)-\nu_i(n))\to \mu_i$ and $\nu_i^*(n)\to\frac{r_i+1}{2}$, the discrete process converges to the reflected Brownian motion with parameters described as on the top right picture (with identity covariance matrix). See \cite{Re-84} for the original proof. For the exact same reasons, in the three-quarter plane, the discrete model on the bottom left picture converges to the reflected Brownian motion on the bottom right display.}
\label{fig:intro_scaling}
\end{figure}

\subsection{Main results and scheme of the proofs}

The present work is a companion paper of \cite{FaFrRa-21}, where we proved that the three-quarter plane stationary distribution could be found by solving a two-dimensional \textit{vector} boundary value problem (BVP) for the associated Laplace transforms. Here we go much further, by showing that one can actually reduce the latter to a classical \textit{scalar} BVP. This has two main consequences, which we shall present in our paper. First, we will obtain explicit contour integral expressions for the Laplace transforms, see Theorem~\ref{thm:main_result} for the precise statement. Second, we will accurately compare the quarter plane and three-quarter plane cases, and understand the transformations and symmetries allowing to pass from one representation to the other.

In order to explain the peculiarities and difficulties intrinsic to the non-convex setting, let us recall the general approach when solving a problem related to (random walks or) Brownian motion in the quarter plane. There are two main steps: 
\begin{enumerate}[label=(\roman{*}),ref=(\roman{*})]
   \item\label{it:eq}stating a functional equation for the Laplace transform of the stationary measure;
   \item\label{it:BVP} deducing a BVP  from the main functional equation, and finally solving this BVP by means of contour integral representations.
\end{enumerate}

In the case of convex domains, step \ref{it:eq} is now routine, see for instance \cite{BaFa-87,FrRa-19}, and \cite{CoBo-83,Co-92,FIM-2017} in the discrete setting. We start from It\^o-Tanaka formula, then prove a so-called ``basic adjoint relationship'', which, applied to exponential test functions, leads to a functional equation for the bivariate Laplace transform
\begin{equation}
\label{eq:Laplace_transform_generic}
   L(p,q) = \int e^{p z_1+ q z_2} \dd\Pi(z_1,z_2),
\end{equation}
$\Pi$ denoting the invariant measure we are looking for. When the integration domain in \eqref{eq:Laplace_transform_generic} is the positive quarter plane, the Laplace transform is clearly analytic (at least) when $\Re(p)\leq 0$ and $\Re(q)\leq 0$. This analyticity property is crucial for step~\ref{it:BVP}.

In the case of the three-quarter plane, giving a sense to the Laplace transform \eqref{eq:Laplace_transform_generic}, and a fortiori deriving a functional equation, turns out to be non-trivial. Indeed, the exponential function in the integrand of \eqref{eq:Laplace_transform_generic} is bounded only in a half-plane  and unbounded in its complement. Therefore, the proper definition and convergence domain of \eqref{eq:Laplace_transform_generic} require fine estimates.

A first step (which explains the title of the present study) is to use Fourier transform, i.e., purely imaginary values of $(p,q)$ in \eqref{eq:Laplace_transform_generic}, in order to take advantage of the integrability of the stationary measure $\Pi$. This is however not sufficient, as we  need to continue meromorphically the Laplace transform onto an open domain of $\mathbb C^2$ containing $(0,0)$. For that purpose, we introduce a continuation procedure related to the one used  in \cite{FaFrRa-21}, and close to those in \cite{FIM-2017,EP-22} in the discrete setting. This is done in Section~\ref{sec:resolution} and Section~\ref{sec:continuation} in the appendix; the main idea is to split the three-quarter plane into two convex cones, and to state two intermediate functional equations for Laplace transforms whose convergence is clear. The  main functional equation is obtained in Section~\ref{sec:functional_equation}, see in particular Proposition~\ref{prop:main_eqfunc}.

Once the functional equation is established, the techniques we employ to derive and solve a BVP (step \ref{it:BVP} above) are more classical, and may be adapted for instance from \cite{BaFa-87,FrRa-19}. We  obtain explicit formulas in Section~\ref{sec:resolution}, see in particular  the result given 
in~Theorem~\ref{thm:main_result}. 

In this work, we also  focus on the main similarities and differences in the formulas for the Laplace transforms in the quarter plane and three-quarter plane, more than on the explicit formulas themselves. This is the topic of Section~\ref{sec:diff}. To select one example, in the quarter plane (resp.\ three-quarter plane), the Laplace transforms satisfy boundary value problems on the left (resp.\ right) branch of the same hyperbola, with an analytic behavior on the left (resp.\ right), see Figure~\ref{fig:domain}. This illustrates that several dualities and symmetries relate the two settings.

The literature contains a few examples of discrete walks in a three-quarter plane, which, starting from Fourier transforms, might also be solved by means of a single functional equation, through subtle manipulations on the various generating functions (see e.g., \cite{BM-16,RaTr-19,Tr-22,EP-22,Bo-22}).

\section{Semimartingale reflected Brownian motion in three-quarter plane}
\label{sec:model}

\subsection{Definition of the process}

We denote the three-quarter plane as
\begin{equation*}
   S \egaldef \{ (z_1,z_2)\in \mathbb{R}^2 : z_1 \geq 0 \text{ or } z_2 \geq 0 \}.
\end{equation*}
The parameters of the model are the drift $\mu=(\mu_1,\mu_2)$, the reflection vectors $R_1=(r_1,1)$ and $R_2=(1,r_2)$, and the covariance matrix
\begin{equation}
\label{eq:covariance_matrix}
\Sigma= \left(
\begin{array}{cc}
\sigma_1 & \rho \\ 
\rho & \sigma_2
\end{array} 
\right),
\end{equation}
see Figure~\ref{fig:three_quarter_plane}. Throughout this study, $\Sigma$ will be assumed to be elliptic, that is
\begin{equation}
\label{eq:elliptic}
   \sigma_1\sigma_2-\rho^2>0,
\end{equation}
and we shall not consider degenerate cases (hypoellipticity, $\Sigma$ non-definite positive, etc.), although they admit an interesting behavior, as shown for example in \cite{IcKa-22}. 
The study of the degenerate case $\sigma_1\sigma_2-\rho^2=0$ would very likely lead to the study of boundary value problems on parabolas, and no longer on hyperbolas as it is the case here (see in this respect Remark~\ref{rem:para}).

\begin{figure}[hbtp]
\centering
\includegraphics[scale=0.4]{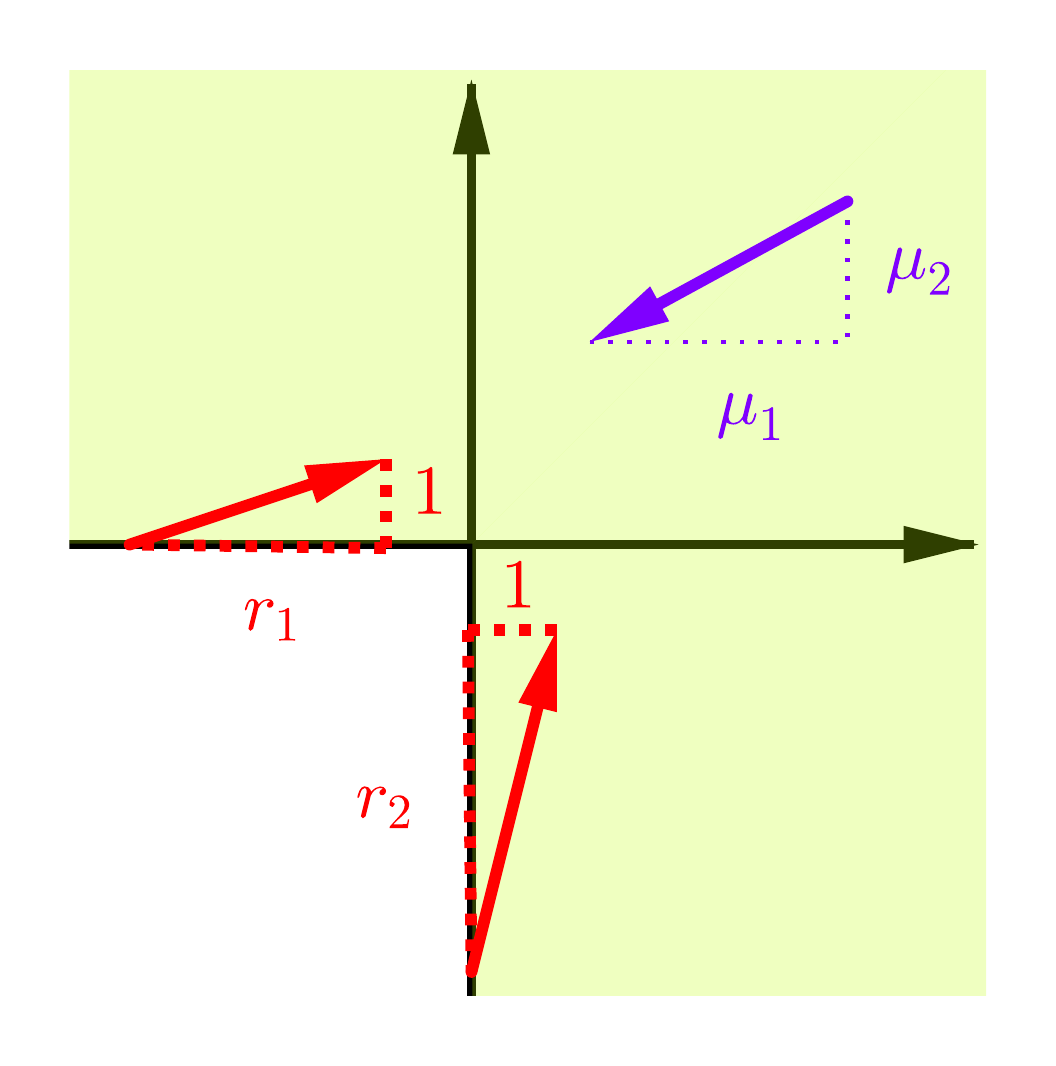}
\caption{In green color, the three-quarter plane $S$, in blue the drift $\mu$ and in red the reflection vectors $R_1$ and $R_2$.}
\label{fig:three_quarter_plane}
\end{figure}

More specifically, we define the obliquely reflected Brownian motion $Z_t=(Z_t^1,Z_t^2)$ in the three-quarter plane $S$ as follows:
\begin{equation*}
\begin{cases}
Z_t^1\egaldef Z_0^1+ W_t^1+ \mu_1 t+ r_1 L_t^1 + L_t^2,
\\
Z_t^2\egaldef Z_0^2+ W_t^2+ \mu_2 t+ L_t^1 + r_2 L_t^2,
\end{cases}
\end{equation*}
where $W_t$ is a planar Brownian motion of covariance $\Sigma$, $L^1_t$ is (up to a constant) the local time on the negative part of the abscissa ($z_1\leq 0$) and $L_t^2$ is the local time on the negative part of the ordinate axis ($z_2\leq 0$). In case of a zero drift, such a semimartingale definition of reflected Brownian motion is proposed in the reference paper \cite{Wi-85}; it readily extends to our drifted case.

Throughout this paper, we assume that the process is positive recurrent and has a unique stationary distribution (or invariant measure). As we shall see, this is equivalent to
\begin{equation}
\label{eq:CNS1_ergodic}
   \mu_1<0 \quad \text{and} \quad \mu_2<0,
\end{equation}
together with
\begin{equation}
\label{eq:CNS2_ergodic}
   \mu_1-r_1\mu_2>0 \quad \text{and} \quad
   \mu_2-r_2\mu_1>0,
\end{equation}
keeping in mind that $r_1$ and $r_2$ are positive.

Under conditions \eqref{eq:CNS1_ergodic} and \eqref{eq:CNS2_ergodic}, we will denote by $\Pi$ this probability measure and by $\pi$ its density. We also define the boundary invariant measures by
\begin{equation*}
   {\nu}_{1} (A) = \mathbb{E}_\Pi \int_0^1 \mathrm{1}_{A\times\{0\} } (Z_s) \mathrm{d}L_s^1
   \quad \text{and} \quad
   {\nu}_{2} (A) = \mathbb{E}_\Pi \int_0^1 \mathrm{1}_{\{0\} \times A} (Z_s) \mathrm{d}L_s^2.
\end{equation*}
The measure ${\nu}_{1}$ has its support on $\{z_1 \leq 0 \}$ and ${\nu}_{2}$ has its support on $\{z_2 \leq 0 \}$. We will also denote by $\nu_1 (z_1)$ and  $\nu_2 (z_2)$ their respective densities.

\subsection{Basic adjoint relationship}
Our approach is based on the following identity, called basic adjoint relationship, which in the orthant case is proved for instance in \cite{DaHa-92,HaWi-87}.

\medskip

\begin{prop}
\label{prop:BAR}
For any function $f:\Rr^2\to\Cc$ of class $\mathcal{C}^2$, assuming the integrals below converge, we have
\begin{equation*}
\int_S \mathcal{G} f(z_1,z_2) \dd \Pi(z_1,z_2) +  \int_{-\infty}^0 R_1 \cdot \nabla f(z_1,0) \dd \nu_1 (z_1) + \int_{-\infty}^0 R_2 \cdot \nabla f(0,z_2)\dd \nu_2 (z_2)=0,
\end{equation*}
where the generator is equal to
\begin{equation*}
\mathcal{G} f = \frac{1}{2} \left(\sigma_1 \frac{\partial^2 f}{\partial z_1^2}+2\rho \frac{\partial^2 f}{\partial z_1 \partial z_2} +\sigma_2 \frac{\partial^2 f}{\partial z_2^2}
 \right)
+ \mu_1 \frac{\partial f}{\partial z_1}
+\mu_2 \frac{\partial f}{\partial z_2}.
\end{equation*}
\end{prop}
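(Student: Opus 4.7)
The plan is to adapt the standard derivation of the basic adjoint relationship in the orthant (as in \cite{DaHa-92,HaWi-87}) to the three-quarter plane, the analytical input being essentially the same since the reflection only occurs on two half-lines emanating from the origin.

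The starting point is It\^o's formula applied to $t\mapsto f(Z_t)$ between $t=0$ and $t=1$. Using the semimartingale decomposition given in Section~\ref{sec:model}, one obtains
\begin{equation*}
f(Z_1)-f(Z_0)=\int_0^1 \mathcal{G}f(Z_s)\,\dd s + M_1 + \int_0^1 R_1\cdot\nabla f(Z_s)\,\dd L^1_s+\int_0^1 R_2\cdot\nabla f(Z_s)\,\dd L^2_s,
\end{equation*}
where $M_t=\int_0^t \nabla f(Z_s)\cdot\dd W_s$ is a local martingale; the second-order and drift terms combine into $\mathcal{G}f$ via \eqref{eq:covariance_matrix}, while the bounded-variation contributions coming from $r_1 L^1, L^1, L^2, r_2 L^2$ regroup into the reflection vectors $R_1$ and $R_2$.

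The next step is to take expectation under the stationary initial law $\Pi$. By stationarity $\mathbb{E}_\Pi[f(Z_1)]=\mathbb{E}_\Pi[f(Z_0)]$, so the left-hand side vanishes. Under the convergence hypothesis in the statement, the martingale term has zero mean, and Fubini gives
\begin{equation*}
0=\int_S \mathcal{G}f\,\dd\Pi + \mathbb{E}_\Pi\!\int_0^1 R_1\cdot\nabla f(Z_s)\,\dd L^1_s +\mathbb{E}_\Pi\!\int_0^1 R_2\cdot\nabla f(Z_s)\,\dd L^2_s.
\end{equation*}
Finally one reduces each local-time term to a boundary integral. By construction, $\dd L^1_s$ is carried by $\{s:Z_s\in(-\infty,0]\times\{0\}\}$, so the very definition of $\nu_1$ yields
\begin{equation*}
\mathbb{E}_\Pi\!\int_0^1 R_1\cdot\nabla f(Z_s)\,\dd L^1_s=\int_{-\infty}^0 R_1\cdot\nabla f(z_1,0)\,\dd\nu_1(z_1),
\end{equation*}
and symmetrically for the $R_2$-term, which gives the announced identity.

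The main technical point to be careful about is not the algebra but the regularity of $Z$: one needs It\^o-Tanaka to apply to a process reflected at the two boundary half-lines of the non-convex cone $S$, and the local times $L^i$ to be well-defined continuous non-decreasing processes. This is exactly the semimartingale construction of \cite{Wi-85} extended to the drifted case, as invoked in Section~\ref{sec:model}. Once this is granted, the only remaining subtlety is the behaviour at the vertex $(0,0)$, but $\Pi$ and $\nu_1,\nu_2$ assign no mass to this point, and the interaction between the two local times at the origin does not affect the computation. The integrability assumption built into the statement then takes care of all exchanges of expectation and integration, as well as of the vanishing of the martingale contribution.
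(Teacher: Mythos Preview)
Your proof is correct and follows essentially the same approach as the paper: apply the It\^o--Tanaka formula to $f(Z_t)$, take expectation under the stationary distribution $\Pi$, and use the definitions of $\nu_1,\nu_2$ to rewrite the local-time terms as boundary integrals. The paper's proof is actually terser than yours---it simply states the It\^o decomposition and says ``take the expectation over $\Pi$''---so your additional remarks on stationarity, the martingale term, and the vertex behaviour only make the argument more complete.
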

\begin{proof}
We apply the well-known It\^o-Tanaka formula to the semimartingale $Z_t$, see Theorem~1.5 in \cite[Chap.~VI \S 1]{Revuz1999}. We obtain
\begin{equation}
\label{eq:ito}
   f(Z_t)=f(Z_0)+\int_0^t \mathcal{G}f (Z_s) \mathrm{d}s +\int_0^t \nabla f (Z_s) \cdot \dd W_s +  \sum_{i\in\{1,2\}} \int_0^t R_i \cdot \nabla f (Z_s) \dd L_s^i .
\end{equation}
To conclude, it suffices to take the expectation over $\Pi$ in the above equality.
\end{proof}
%
\section{The main functional equation}
\label{sec:functional_equation}


Our goal is to apply the basic adjoint relationship of Proposition~\ref{prop:BAR}, in order to construct a functional equation for the two-dimensional Fourier transform of~$\Pi$. 
\begin{itemize}
\item In the case of a convex cone, after a linear change of variables in the $(z_1,z_2)$-plane, it suffices to choose $f(z_1,z_2)=e^{p z_1+ q z_2}$, with $\{\Re{(p)}\geq0,\,\Re{(q)}\geq0\}$, see \cite{DaMi-11,FrRa-19}, and a functional equation is obtained for  the Laplace transform of $\Pi$, say $F(p,q)$.
\item However, when the cone is not convex, some integral transforms will not converge, and one has to divide the three-quarter plane into  two convergence regions. This leads to a system of two functional equations for the Laplace transforms, see \cite{FaFrRa-21}, which is more awkward to solve.
\end{itemize}

We show hereafter  that, \emph{by starting from  the Fourier transform of the stationary distribution},  it is  still  possible to find a single kernel equation which can be solved by reduction to a BVP for a function of a single complex variable.

%
%

The three following Fourier transforms (related to the invariant measure $\Pi$ in $S$)
\begin{equation}
\label{eq:fourier}
\begin{cases}
\DD L(p,q) \egaldef \int_{S} e^{p z_1+ q z_2} \dd\Pi(z_1,z_2),  \\[0.4cm]
\DD A(p) \egaldef \int_{-\infty}^0 e^{p z_1} \dd\nu_1 (z_1), \\[0.4cm]
\DD B(q) \egaldef \int_{-\infty}^0 e^{q z_2} \dd\nu_2 (z_2),
\end{cases}
\end{equation}
are a priori well defined in the domain $\{\Re(p)=0,\,\Re(q)=0\}$.


\medskip

\begin{prop}
\label{prop:main_eqfunc} 
For all $(p,q)$ in  the region $\{\Re(p)=\Re(q)=0\}$, we have
\begin{equation}
\label{eq:eqfunc}
  \boxed{K(p,q)L(p,q) + u(p,q) A(p) + v(p,q) B(q) =0}
\end{equation}
where the kernel $K(p,q)$ is given by
\begin{equation} 
\label{eq:kernel}
 K(p,q) \egaldef \frac{1}{2} \left(\sigma_1 p^2 +2\rho pq +\sigma_2 q^2\right) 
 + \mu_1 p+\mu_2q,
\end{equation}
while $u(p,q)$ and $v(p,q)$ are the linear functions 
\begin{equation}
\label{eq:coef}
u(p,q) \egaldef r_1p + q, \quad v(p,q) \egaldef r_2q + p.
\end{equation}
\end{prop}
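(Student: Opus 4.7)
The plan is to apply the basic adjoint relationship of Proposition~\ref{prop:BAR} to the family of exponential test functions
\begin{equation*}
   f_{p,q}(z_1,z_2) \egaldef e^{pz_1+qz_2}, \qquad \Re(p)=\Re(q)=0,
\end{equation*}
and to read off \eqref{eq:eqfunc} from the identification of each term with $L$, $A$ or $B$. Since $f_{p,q}$ is smooth and bounded on the imaginary locus, this choice is natural; the content of the proof is essentially verifying hypotheses and computing derivatives.

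First I would check that the three integrals in Proposition~\ref{prop:BAR} converge. For purely imaginary $p,q$, one has $|f_{p,q}|\equiv 1$ on $\mathbb{R}^2$, and consequently $|\mathcal{G}f_{p,q}|\leq |K(p,q)|$ and $|R_i\cdot\nabla f_{p,q}|\leq (|r_i|+1)(|p|+|q|)$, all uniformly bounded in $(z_1,z_2)$. Because $\Pi$ is a probability measure, $L(p,q)$ is well defined and bounded by $1$ in modulus; and because the boundary invariant measures are defined by $\nu_i(A)=\mathbb{E}_\Pi\int_0^1 \mathbf 1_{A}\,\dd L_s^i$, positive recurrence together with the stationarity of $Z$ ensures $\nu_i(\mathbb{R}_-)=\mathbb{E}_\Pi L_1^i<\infty$, so $A(p)$ and $B(q)$ are well defined too. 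This uses the finiteness of the total local time mass over one unit of time, which is a standard consequence of $Z$ being a positive recurrent semimartingale reflected Brownian motion.

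Next I would compute, by direct differentiation of $f_{p,q}$,
\begin{equation*}
   \mathcal{G}f_{p,q}(z_1,z_2) = \Bigl[\tfrac{1}{2}(\sigma_1 p^2+2\rho pq+\sigma_2 q^2)+\mu_1 p+\mu_2 q\Bigr] f_{p,q}(z_1,z_2) = K(p,q)\,f_{p,q}(z_1,z_2),
\end{equation*}
and, using $R_1=(r_1,1)$ and $R_2=(1,r_2)$,
\begin{equation*}
   R_1\cdot\nabla f_{p,q}(z_1,0) = (r_1 p+q)\,e^{pz_1} = u(p,q)\,e^{pz_1},
   \qquad
   R_2\cdot\nabla f_{p,q}(0,z_2) = (p+r_2 q)\,e^{qz_2} = v(p,q)\,e^{qz_2}.
\end{equation*}
Substituting into the identity of Proposition~\ref{prop:BAR} and pulling the constants $K(p,q), u(p,q), v(p,q)$ out of their respective integrals, the three resulting integrals match exactly the definitions \eqref{eq:fourier} of $L(p,q)$, $A(p)$ and $B(q)$, giving \eqref{eq:eqfunc}.

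The main obstacle is the convergence and applicability step, not the computation. In particular, Proposition~\ref{prop:BAR} was proved by applying It\^o--Tanaka and taking expectation under $\Pi$, and for a non-compactly supported test function one must verify that the stochastic integral $\int_0^t \nabla f_{p,q}(Z_s)\cdot \dd W_s$ is a genuine (square-integrable) martingale of zero mean, which follows from the uniform bound $|\nabla f_{p,q}|\leq |p|+|q|$, and that the boundary integrals driven by $\dd L^i_s$ have finite expectation under the stationary law, which is where the finiteness of $\mathbb{E}_\Pi L_1^i$ enters. Once these two points are cleared, the rest of the argument is a one-line substitution.
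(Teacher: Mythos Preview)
Your proof is correct and follows exactly the approach of the paper: apply Proposition~\ref{prop:BAR} to the exponential test function $f(z_1,z_2)=e^{pz_1+qz_2}$ with purely imaginary $(p,q)$, so that all integrals converge, and read off the identity. The paper's own proof is a two-line sketch (``Further details are omitted''), and your write-up simply supplies those details---the boundedness of $f$ and its derivatives on the imaginary locus, the finiteness of $\mathbb{E}_\Pi L_1^i$, and the explicit computation of $\mathcal{G}f$ and $R_i\cdot\nabla f$.
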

\begin{proof}
It is a direct application of Proposition~\ref{prop:BAR}, by choosing  $f(z_1,z_2)=e^{p z_1+ q z_2}$, where $(p,q)$ are purely imaginary complex numbers, so that the integrals in~\eqref{eq:ito} are well defined. Further details are omitted.
\end{proof}
Equation \eqref{eq:eqfunc} says that the probability measure in $S$ is given in terms of the probability measures on the boundary formed by the two negative half-axes, which corresponds to the intuition.

In this respect, the following relations (mass conservation), which are immediately obtained from \eqref{eq:kernel}, are interesting. Observe that $L(0,0)=\Pi (S)=1$, $A(0)=\nu_1(\mathbb{R}_-)$ and $B(0)=\nu_2(\mathbb{R}_-)$.
\begin{equation*}
\left\{ \begin{array}{l}
\mu_2 L(0,0) + A(0) +r_2B(0)=0, \\[0.5cm]
\mu_1 L(0,0) + r_1 A(0) + B(0)=0.
\end{array} \right. \qquad 
\left\{ \begin{array}{l} 
\DD A(0)=\frac{\mu_1r_2-\mu_2}{1-r_1r_2} , \\[0.5cm]
\DD B(0)=\frac{\mu_2r_1-\mu_1}{1-r_1r_2} .
\end{array}\right.
\end{equation*}
Note that the apparently peculiar case  $1-r_1r_2=0$ has not to be considered, since if $r_1r_2=1$ the two conditions in~\eqref{eq:CNS2_ergodic} cannot be simultaneously satisfied, in which case the process is not positive recurrent.

In the next lemmas, we make a suitable  analytic continuation with respect to $(p,q)$ of the functions defined in~\eqref{eq:eqfunc}.
\smallskip
\begin{lem}
\label{prop:anacont1}
\mbox{}
\begin{enumerate}[label={\it(\arabic{*})},ref={\it(\arabic{*})}]
\item\label{it:analy-1}The functions $A(p)$ and $B(q)$ are analytic in their respective domains $\{\Re{(p)}\geq0\}$ and $\{\Re{(q)}\geq0\}$.
\item\label{it:analy-2}The function $L(p,q)$ is analytic with respect to $(p,q)$ in the domain
\begin{equation}
\label{eq:domain}
   \{\Re{(p)}\geq0\} \cap \{\Re{(q)}\geq0\} \cap \{K(p,q)\neq0\}.
\end{equation}
\end{enumerate}
\end{lem}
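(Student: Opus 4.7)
The plan is to treat the two parts separately: part~\ref{it:analy-1} is a direct consequence of the defining integrals having uniformly bounded integrands on the relevant half-planes, whereas part~\ref{it:analy-2} uses the functional equation~\eqref{eq:eqfunc} to produce an analytic continuation of $L$ beyond the imaginary torus where the Fourier integral is initially defined. The main obstacle for part~\ref{it:analy-2} is that the integral defining $L$ does \emph{not} extend to the claimed domain, so the result is really about using the functional equation to \emph{define} $L$ by continuation.

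For part~\ref{it:analy-1}, the key observation is that if $z_1\leq 0$ and $\Re(p)\geq 0$, then $|e^{pz_1}|=e^{\Re(p) z_1}\leq 1$, and analogously for the integrand of $B$. Since $\nu_1$ and $\nu_2$ are finite boundary measures (their total masses $A(0)$ and $B(0)$ being given explicitly by the mass-conservation relations displayed after Proposition~\ref{prop:main_eqfunc}), the integrals $A(p)$ and $B(q)$ converge absolutely and are uniformly bounded on the respective closed half-planes. Analyticity on the open half-plane follows from differentiation under the integral sign, since on any compact subset of $\{\Re(p)>0\}$ the integrand $z_1 e^{pz_1}$ admits an integrable majorant $|z_1|e^{\delta z_1}$ for a suitable $\delta>0$; continuity up to the imaginary axis then follows by dominated convergence.

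For part~\ref{it:analy-2}, note first that on the positive quadrant $\{z_1,z_2\geq 0\}\subset S$ the integrand $e^{pz_1+qz_2}$ is unbounded as soon as either $\Re(p)$ or $\Re(q)$ is strictly positive, so the Fourier integral is only defined a priori on the imaginary torus. There, Proposition~\ref{prop:main_eqfunc} allows us to rewrite, wherever $K(p,q)\neq 0$,
\begin{equation*}
L(p,q) \;=\; -\,\frac{u(p,q)\,A(p)+v(p,q)\,B(q)}{K(p,q)}.
\end{equation*}
By part~\ref{it:analy-1} and the fact that $u,v,K$ are polynomials in $(p,q)$, the right-hand side is meromorphic on $\{\Re(p)\geq 0\}\cap\{\Re(q)\geq 0\}$, with polar locus contained in $\{K(p,q)=0\}$. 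Removing this locus yields the analyticity domain~\eqref{eq:domain}, establishing part~\ref{it:analy-2}.
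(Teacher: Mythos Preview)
Your proof is correct and follows the same route as the paper: part~\ref{it:analy-1} is read off from the integral definitions, and part~\ref{it:analy-2} is obtained by using the functional equation~\eqref{eq:eqfunc} to express $L$ as $-(uA+vB)/K$ and thereby continue it off the imaginary axes. The only cosmetic difference is that the paper phrases the continuation as being done first in each variable separately and then invokes Hartogs' and Osgood's theorems to pass to joint analyticity, whereas you observe directly that the right-hand side is a quotient of jointly analytic functions, which makes the appeal to Hartogs/Osgood unnecessary.
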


\begin{proof} 
The point \ref{it:analy-1} is immediate  from the definition given in~\eqref{eq:fourier}. 
As for point \ref{it:analy-2}, we use \eqref{eq:eqfunc} to make the analytic continuation of $L(p,q)$ in each variable $p$ and  $q$, and we apply Hartogs' and Osgood's theorems (see, e.g., \cite{BoMa-48}) to deduce the analyticity in both variables $(p,q)$ in the region~\eqref{eq:domain}.
\end{proof}

The algebraic curve $\{K=0\}$ has genus $0$, and its branches over the $q$-plane (resp.\ $p$-plane) will be denoted by $P_i(q)$ (resp.\ $Q_i(p))$, for $i=1,2$. By definition, they satisfy the equations
\begin{equation}
\label{eq:Kroot}
  \boxed{K(P_i(q),q)=0 \quad\text{and}\quad K(p, Q_i(p)) = 0.}
\end{equation}
Introduce the hyperbola $\H_p$ with equation
\begin{equation}
\label{eq:HP}
   (\rho^2 -\sigma_1\sigma_2)x^2 + \rho^2 y^2 
  +2(\rho\mu_2-\mu_1\sigma_2)x + \frac{\mu_1(2\rho\mu_2-\sigma_2\mu_1)}{\sigma_1} = 0.
\end{equation}

\begin{rem} \label{rem:para} When $\rho^2 -\sigma_1\sigma_2=0$, equation~\eqref{eq:HP} represents in fact a parabola. As mentioned in Section~\ref{sec:model}, this  limiting case will not be considered in the sequel. 
\end{rem}

\begin{lem}
\label{lem:Pq}
The functions $P_i(q)$, $i=1,2$, are analytic in the whole complex plane cut along $(-\infty,q_1]\cup [q_2,\infty)$, where the branch points $q_1<0$ and $q_2>0$ are the two real roots of the equation 
\begin{equation}
\label{eq:bpP}
   (\rho^2-\sigma_1\sigma_2)q^2 + 2(\rho\mu_1 - \sigma_1 \mu_2)q + \mu_1^2 = 0.
\end{equation}
The branches $P_1(q)$ and $P_2(q)$ satisfy the following properties:
\begin{enumerate}[label={\it(\arabic{*})},ref={\it(\arabic{*})}]
\item\label{it:sep}They are separated, in the sense that
\begin{equation}
\label{eq:sepP}
\begin{cases}
\DD \Re(P_1(q))  \leq \Re(P_2(q)), \quad \forall q \in \Cc,  \\[0.2cm]
\DD P_1(0) = 0, \quad P_2(0) = \frac{-2\mu_1}{\sigma_1}>0.
\end{cases}
\end{equation}
\item\label{it:hyp}For $\rho<0$, they map the cut $(-\infty,q_1]$ (resp.\ $[q_2,\infty)$) 
onto  the left component of $\H_p$, denoted by $\H_p^-$  (resp.\ right component of $\H_p$, denoted by $\H_p^+$).
\item For $\rho>0$, just exchange  the left and right components in the previous statement.
\item\label{it:deg}For $\rho=0$, the hyperbola is degenerate and its two components coincide with the vertical line of abscissa $x=-\mu_1/\sigma_1$.
\end{enumerate}
\end{lem}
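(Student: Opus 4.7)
The plan is to treat the equation $K(p,q)=0$ as a quadratic in $p$, compute its roots explicitly, and then read off the stated analytic and geometric properties directly from those formulas. Writing $K=0$ as $\frac{\sigma_1}{2}p^2 + (\rho q+\mu_1)p + (\frac{\sigma_2}{2}q^2+\mu_2 q) = 0$ yields the two branches
\[
P_{1,2}(q) = \frac{-(\rho q+\mu_1) \mp \sqrt{\Delta(q)}}{\sigma_1}, \qquad
\Delta(q) := (\rho^2-\sigma_1\sigma_2)q^2 + 2(\rho\mu_1-\sigma_1\mu_2)q + \mu_1^2,
\]
so that the discriminant equation $\Delta(q)=0$ is exactly \eqref{eq:bpP}. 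By the ellipticity assumption \eqref{eq:elliptic}, the leading coefficient of $\Delta$ is negative, and $\Delta(0)=\mu_1^2>0$; hence (by Vieta) the two real roots $q_1,q_2$ have opposite signs, which forces $q_1<0<q_2$, with $\Delta>0$ on $(q_1,q_2)$ and $\Delta<0$ outside. Cutting $\mathbb{C}$ along $(-\infty,q_1]\cup[q_2,\infty)$ (where $\Delta\leq0$) and choosing the principal branch of $\sqrt{\Delta(q)}$ normalized to be positive on $(q_1,q_2)$ makes $P_1$ and $P_2$ holomorphic on the cut plane.

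For the separation property \ref{it:sep}, the convention $\Re\sqrt{\Delta(q)}\geq 0$ together with $\sigma_1>0$ forces $\Re(P_1)\leq\Re(P_2)$ everywhere. At $q=0$ one has $\sqrt{\Delta(0)}=|\mu_1|=-\mu_1$ by \eqref{eq:CNS1_ergodic}, yielding $P_1(0)=0$ and $P_2(0)=-2\mu_1/\sigma_1>0$ as claimed. For the hyperbola part \ref{it:hyp}, on either cut $\Delta(q)\leq0$, so $\sqrt{\Delta(q)}$ is purely imaginary and $P_i(q)=x+iy$ with
\[
x = -\frac{\rho q + \mu_1}{\sigma_1}, \qquad y^2 = -\frac{\Delta(q)}{\sigma_1^2}.
\]
Eliminating $q$ (using $q=-(\sigma_1 x+\mu_1)/\rho$ when $\rho\neq 0$ and substituting into the relation for $y^2$) produces exactly equation \eqref{eq:HP}, showing that each cut is mapped into $\H_p$.

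To distinguish the left from the right component, I track the asymptotics as $|q|\to\infty$ along the cuts: one has $x(q)\sim -\rho q/\sigma_1$. For $\rho<0$, this sends $[q_2,\infty)$ to $x\to+\infty$ and $(-\infty,q_1]$ to $x\to-\infty$, giving $\H_p^+$ and $\H_p^-$ respectively; reversing the sign of $\rho$ simply swaps the two correspondences, giving the $\rho>0$ statement. Finally, when $\rho=0$, the formula for $x$ collapses to the constant $-\mu_1/\sigma_1$ while $y$ ranges over all of $\mathbb{R}$ as $q$ traverses either cut, recovering the degenerate case \ref{it:deg}. The main obstacle in this plan is the algebraic elimination that produces \eqref{eq:HP}; it is routine but requires careful tracking of the coefficients, and it is also the identity that justifies interpreting \eqref{eq:HP} as a genuine image curve of the Kernel, which is the geometric content of the lemma.
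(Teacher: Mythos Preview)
Your argument is correct and takes a more explicitly computational route than the paper. For the separation property~\ref{it:sep}, the paper first restricts to the imaginary axis $q=i\gamma$, separates real and imaginary parts of $K(\alpha+i\beta,i\gamma)=0$, and shows that the resulting real quadratic in $\alpha$ has two roots of opposite sign; it then propagates $\Re(P_1)\leq\Re(P_2)$ to the whole cut plane by applying the maximum modulus principle to $\exp(P_1-P_2)$, using that $P_1,P_2$ are complex conjugate on the cuts so that $\lvert\exp(P_1-P_2)\rvert=1$ there. You bypass this entirely by reading the inequality directly off the quadratic formula via $\Re\sqrt{\Delta(q)}\geq0$. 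That is cleaner, but it deserves one extra line of justification: the holomorphic branch of $\sqrt{\Delta}$ positive on $(q_1,q_2)$ indeed has nonnegative real part on the whole cut plane because $\Delta(q)=(\rho^2-\sigma_1\sigma_2)(q-q_1)(q-q_2)$ lies in $(-\infty,0]$ only when $q$ is on one of the cuts (a two-line check), so the principal square root of $\Delta(q)$ is already holomorphic off the cuts and the two ``conventions'' you invoke coincide. For the hyperbola claim~\ref{it:hyp}, the paper pins down which cut goes to which component of $\mathcal{H}_p$ by comparing the vertex abscissas $P_1(q_i)=P_2(q_i)=-(\rho q_i+\mu_1)/\sigma_1$ at the two branch points; your asymptotic argument $x(q)\sim-\rho q/\sigma_1$ achieves the same identification and is arguably quicker.
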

To simplify the notation, we will note the branch of hyperbola
\begin{equation*}
\H \egaldef
\begin{cases}
\H_p^+ & \text{if } \rho<0,
\\
\H_p^- & \text{if } \rho>0.
\end{cases}
\end{equation*}
See Figure~\ref{fig:domain} for a representation of the branch $\H$ of the hyperbola according to the sign of $\rho$.
Symmetrically, we define the functions $Q_i(p)$, the branch points $p_1<0$ and $p_2>0$ and the hyperbola $\H_q$.

\begin{proof}
The fact that the roots of \eqref{eq:bpP} are real is immediate since $\rho^2-\sigma_1\sigma_2<0$. The branch points of the algebraic function $P(q)$ are the zeros of the discriminant of $K(p,q)=0$ viewed as a polynomial in $p$, so that \eqref{eq:bpP} follows directly. 

We now prove~\ref{it:sep}. Let in \eqref{eq:Kroot} $q=i\gamma$ and $P(q)\egaldef \alpha+i\beta$, with real $\alpha,\beta$. After separating  real and imaginary parts, we obtain 
\begin{equation}
\label{eq:sepP2}
\begin{cases}
\DD \frac{\sigma_1}{2}\alpha^2 + \mu_1\alpha -\left(\frac{\sigma_2}{2}\gamma^2 + \rho\beta\gamma + \frac{\sigma_1}{2} \beta^2\right) = 0, \\[0.3cm] 
\DD \beta(\sigma_1\alpha +\rho\gamma) + \gamma(\rho\alpha +\mu_2) = 0.
\end{cases}
\end{equation}
One checks that the first equation in \eqref{eq:sepP2}, viewed as a polynomial in~$\alpha$, has two real roots with opposite signs, since the quadratic polynomial in~$\gamma$ is always positive, due to the ellipticity condition~\eqref{eq:elliptic}. Accordingly, we will denote by $P_1(q),P_2(q)$ the branches satisfying 
\begin{equation*}
   \Re(P_1(ix)) \leq 0 \leq \Re(P_2(ix)),  \quad  \forall x\in \mathbb{R}.
\end{equation*}
Then the first property in \eqref{eq:sepP} is a direct application of the maximum modulus principle to the function $\exp(P_1(q)-P_2(q))$ in the domain 
$\mathbb C\setminus \{(-\infty,q_1]\cup [q_2,\infty)\}$,
keeping in mind that on the  cuts $(-\infty,q_1]$ and $[q_2,\infty)$, the branches $P_1(q)$ and $P_2(q)$ are complex conjugate, in which case $\vert\exp(P_1(q)-P_2(q))\vert=1.$

As for point \ref{it:hyp}, the analytic expression \eqref{eq:HP} of the hyperbola~$\H_p$ follows from direct computations. On the other hand, by \eqref{eq:kernel} and \eqref{eq:Kroot}, we have
   \begin{equation*}
P_1(q_i) + P_2(q_i) = \frac{-2(\rho q_i +\mu_1)}{\sigma_1}, \quad  i=1,2.
\end{equation*}
When $\rho<0$, Equation~\eqref{eq:HP} shows that the hyperbola $\H$ crosses the $x$-axis at two points with positive abscissas, whence 
\begin{equation*}
\frac{-2(\rho q_2 +\mu_1)}{\sigma_1}=P_1(q_2) + P_2(q_2)>P_1(q_1) + P_2(q_1)>0.
\end{equation*}
Point \ref{it:deg} is elementary after putting $\rho=y=0$ in~\eqref{eq:HP}, and the proof of the lemma is concluded.
\end{proof}

\begin{figure}[hbtp]
\centering
   \begin{subfigure}{\textwidth}
         \centering
         \includegraphics[trim = 0cm 1cm 0cm 1cm, clip,scale=1.3]{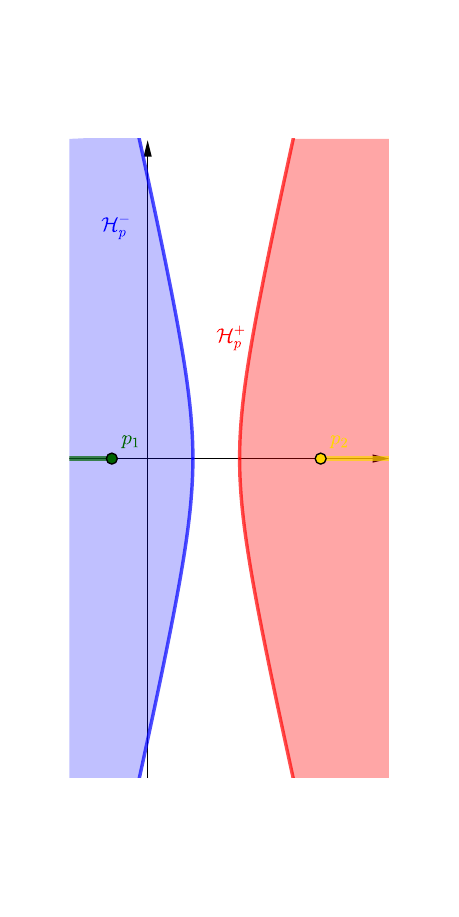}
\includegraphics[trim = 0cm 1cm 0cm 1cm, clip,scale=1.3]{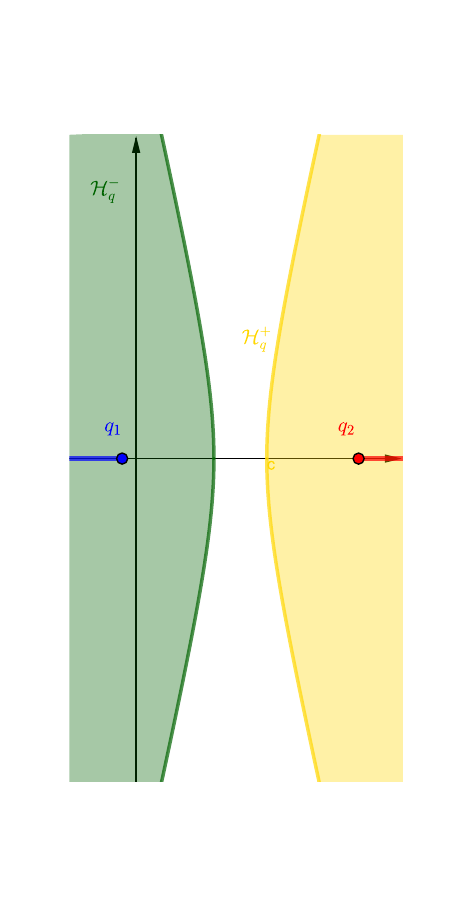}
         \caption{Case $\rho<0$}
     \end{subfigure}
     \hfill
      \begin{subfigure}{\textwidth}
               \centering
     \includegraphics[trim = 0cm 1cm 0cm 1cm, clip,scale=1.3]{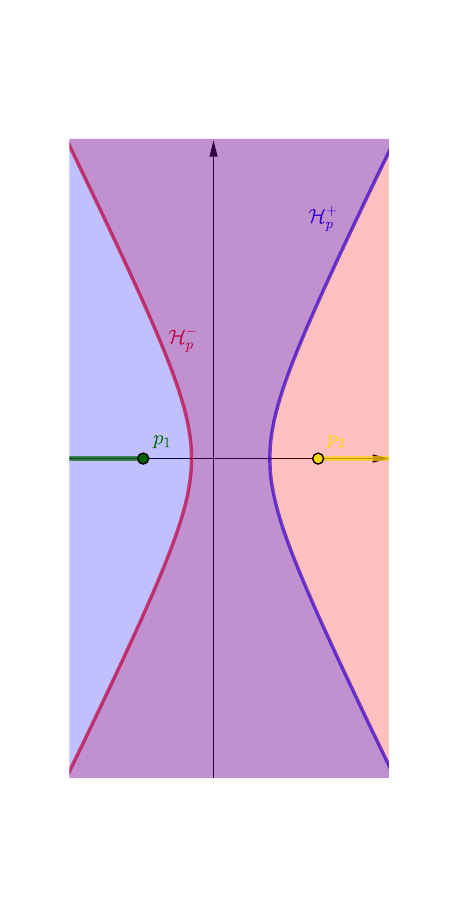}
\includegraphics[trim = 0cm 1cm 0cm 1cm, clip,scale=1.3]{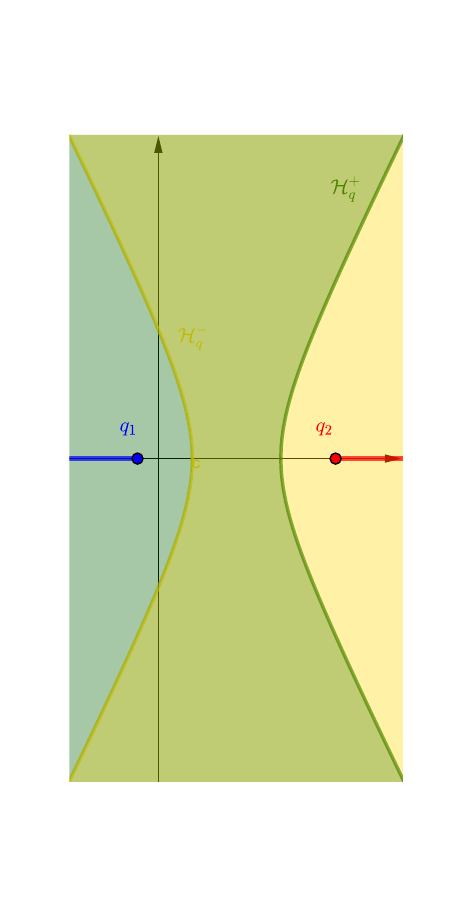}
         \caption{Case $\rho>0$}
     \end{subfigure}
\caption{On the left (resp.\ right), the complex plane of the variable $p$ (resp.\ $q$). The cut $[q_2,\infty)$, represented by the red half-line, is mapped by $P_1$ and $P_2$ onto the red branch of the hyperbola $\mathcal{H}_p^+$ when $\rho<0$, and $\mathcal{H}_p^-$ when $\rho>0$. The same holds for other colors. 
The colored domains are the corresponding domains of the respective BVPs. The domains corresponding to the quarter plane (resp.\ three-quarter plane) BVP are represented in blue and green (resp.\ red and yellow).}
 \label{fig:domain}
\end{figure}

\begin{figure}[hbtp]
\centering
   \begin{subfigure}{\textwidth}
         \centering
         \includegraphics[trim = 0.5cm 0.5cm 0.5cm 0.5cm, clip,scale=1.3]{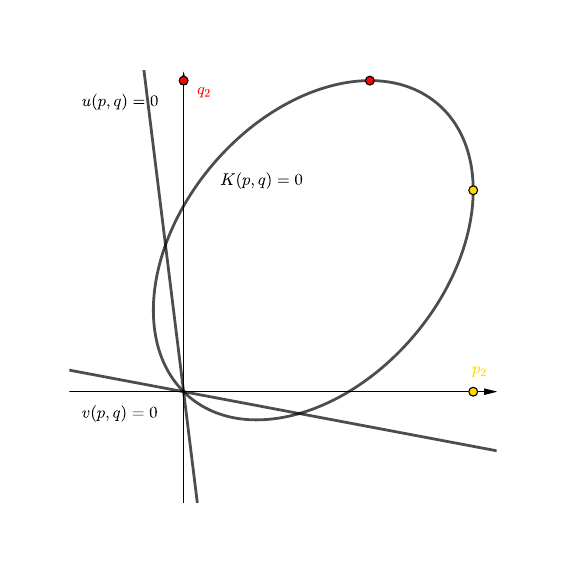}
\includegraphics[trim = 0.5cm 0.5cm 0.5cm 0.5cm, clip,scale=1.3]{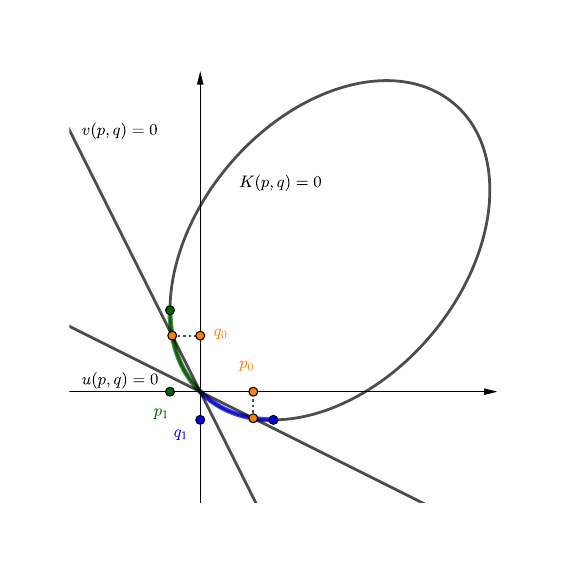}
         \caption{Case $\rho<0$}
     \end{subfigure}
     \hfill
      \begin{subfigure}{\textwidth}
               \centering
     \includegraphics[trim = 0.5cm 0.5cm 0.5cm 0.5cm, clip,scale=1.3]{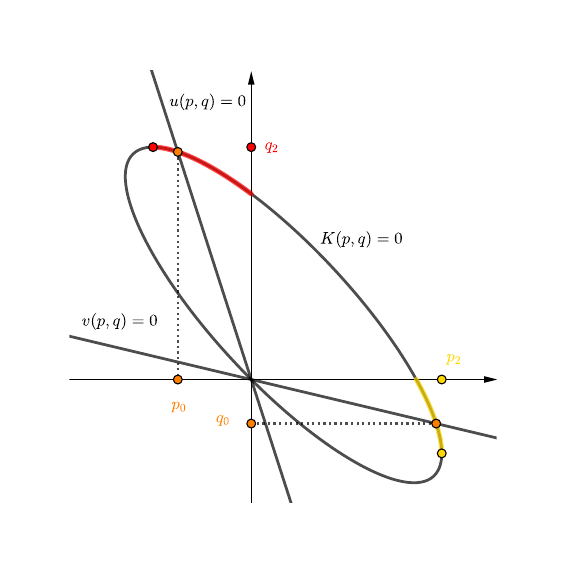}
\includegraphics[trim = 0.5cm 0.5cm 0.5cm 0.5cm, clip,scale=1.3]{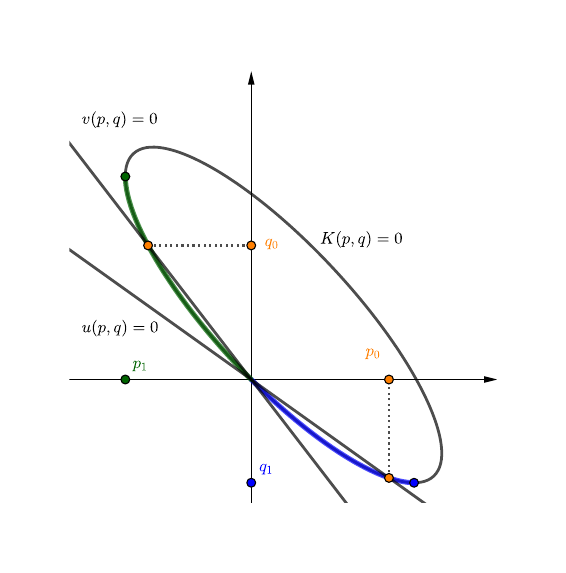}
         \caption{Case $\rho>0$}
     \end{subfigure}
\caption{Representation of the ellipse $\{(p,q)\in\mathbb R^2:K(p,q)=0\}$, with $p$ in abscissa and $q$ in ordinate. The case of the $3/4$ (resp.\ $1/4$) plane is displayed on the left (resp.\ right). The pole $p_0$ of $A(p)$ (resp.\ the pole $q_0$ of $B(q)$) is in orange. This pole exists if and only if the line $u(p,q)=0$ (resp.\ $v(p,q)=0$) intersects the corresponding colored arc of the ellipse. The colors match with Figure~\ref{fig:domain}. Observe that, when $\rho<0$, the functions $A(p)$ and $B(q)$ for the $3/4$-plane cannot have a pole in the domain of the BVP, which in this case is included in the right half-plane.}
 \label{fig:ellipse}
\end{figure}

\section{Solving the boundary value problem}
\label{sec:resolution}

Letting $\H_{+}^{o}$ denote the domain inside the right component of the hyperbola $\H$, the following theorem holds.
\medskip
\begin{thm}
 The function $A(p)$ in \eqref{eq:fourier}, sought to be meromorphic in the domain $\H_{+}^{o}$, satisfies the  BVP
\begin{equation*}
f(P_1(q),q)A(P_1(q)) - f(P_2(q),q)A(P_2(q)) =0, \quad \forall q\in[q_2,\infty),
\end{equation*}
which can be rewritten in the equivalent form
\begin{equation}
\label{eq:statement_BVP}
 \boxed{g(p)A(p) - g(\bar{p})A(\bar{p}) = 0, \quad \forall p \in  \H,}
\end{equation}
where
\begin{equation*}
f(p,q) = \frac{u(p,q)}{v(p,q)}, \qquad g(p)=f(p,Q_2(p)).
\end{equation*}
\label{thm:BVP}
\end{thm}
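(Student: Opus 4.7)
The plan is to exploit the kernel structure of the main functional equation~\eqref{eq:eqfunc}: on the curve $\{K(p,q)=0\}$, the term $K(p,q)L(p,q)$ vanishes, leaving a linear relation between $A(p)$ and $B(q)$. Substituting the two branches $P_1(q),P_2(q)$ for the same real $q\in[q_2,\infty)$ and eliminating $B(q)$ will yield the first form of the BVP; a parametrization argument then converts it to the second form on~$\H$.

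Concretely, I first upgrade the identity~\eqref{eq:eqfunc} by analytic continuation to the domain~\eqref{eq:domain}, using Lemma~\ref{prop:anacont1}\ref{it:analy-2}, and rewrite it as
\begin{equation*}
u(p,q)A(p) + v(p,q)B(q) = -K(p,q)L(p,q).
\end{equation*}
For $q\in[q_2,\infty)$, Lemma~\ref{lem:Pq}\ref{it:hyp} ensures that $P_1(q)$ and $P_2(q)$ are complex conjugate and lie on $\H$. Letting $p\to P_i(q)$ from within the analyticity domain, the right-hand side tends to~$0$ because $K\to 0$ while $L$ remains finite at these generic kernel points, giving, for $i=1,2$,
\begin{equation*}
u(P_i(q),q)A(P_i(q)) + v(P_i(q),q)B(q) = 0.
\end{equation*}
Dividing by $v(P_i(q),q)$ (which is generically nonzero) and subtracting the two identities eliminates $B(q)$ and produces the first form of the BVP stated in the theorem.

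To match~\eqref{eq:statement_BVP}, I parametrize $\H$ by $q\mapsto P_1(q)$ over $[q_2,\infty)$, so that $p=P_1(q)\in\H$ and $\bar p=P_2(q)$ by complex conjugation on the cut. Since $K(p,q)=0$ we have $q\in\{Q_1(p),Q_2(p)\}$, and the sign convention inherited from Lemma~\ref{lem:Pq}\ref{it:sep} applied to the $Q$-branches forces $q=Q_2(p)$; by the real-coefficient symmetry of $K$, also $q=Q_2(\bar p)$. Hence $f(P_1(q),q)=g(p)$ and $f(P_2(q),q)=g(\bar p)$, which rewrites the previous identity exactly as~\eqref{eq:statement_BVP}. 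The main obstacle is the limiting step in the second paragraph: since $\{K=0\}$ lies on the boundary of~\eqref{eq:domain}, one must verify that $L(p,q)$ does not blow up along the relevant portion of the kernel (equivalently, that the vanishing of $uA+vB$ there is compatible with finiteness of $L$), and that the substitution $p=P_i(q)$ really lands $A$ in the domain $\H_+^o$ where it is meromorphically defined. These points rest on the meromorphic continuation procedure outlined in the introduction and carried out in Section~\ref{sec:continuation} of the appendix.
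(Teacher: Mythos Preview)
Your overall strategy—substitute the two branches $P_1(q),P_2(q)$ for $q\in[q_2,\infty)$ into the kernel relation and eliminate $B(q)$—is exactly the paper's. The reparametrization to the form~\eqref{eq:statement_BVP} is also correct.

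The difference lies in how you justify the key identity $u(P_i(q),q)A(P_i(q))+v(P_i(q),q)B(q)=0$. You try to obtain it by letting $p\to P_i(q)$ in the continued functional equation and arguing that $K(p,q)L(p,q)\to 0$ because ``$L$ remains finite''. But Lemma~\ref{prop:anacont1}\ref{it:analy-2} gives analyticity of $L$ only \emph{off} the kernel curve; the domain~\eqref{eq:domain} explicitly excludes $\{K=0\}$. Since the functional equation itself reads $L=-(uA+vB)/K$, finiteness of $L$ at kernel points is equivalent to the vanishing of $uA+vB$ there, which is what you want to prove—so the limiting argument is circular. You recognize this and defer to the appendix, which is the right call, but the intermediate paragraph about $L$ staying bounded should simply be dropped.

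The paper bypasses this issue. It splits $S=S_1\cup S_2$ into two convex wedges, writes $L=L_1+L_2$, and invokes the pair of identities~\eqref{eq:eqL1}--\eqref{eq:eqL2} (proved in the companion paper) involving an auxiliary function $F(p,q)$ meromorphic on $D=\{\Re(p+q)\le 0\}$. On $\{K=0\}\cap D$ these collapse to $F=uA$ and $F=-vB$, whence $uA+vB=0$ directly, with no reference to the behavior of $L$ near the kernel. Proposition~\ref{prop:appendix} then extends this relation and the domains of $A,B$ to the full cut. Note also that when $\rho>0$ the branch $\H=\H_p^-$ lies in the left half-plane, so $\Re(P_i(q))<0$ for $q\in[q_2,\infty)$ and the points $(P_i(q),q)$ are not even in the domain~\eqref{eq:domain}; the continuation via the decomposition and the appendix is then essential, not merely a technical patch.
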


\begin{proof}

By Lemma~\ref{prop:anacont1}, we know that $L(p,q)$  is analytic in the $(p,q)$-domain
\begin{equation*}
  \F \egaldef \{\Re(p)\ge0\} \cap \{\Re(q)\ge0\} \cap \{K(p,q)\ne0\}.
\end{equation*}
Let us express  $S$ as the disjoint union of the two following convex wedges
\begin{equation*}
S_1\egaldef\{z_1\leq z_2 \text{ and } z_2 \geq 0 \}, \quad S_2\egaldef S\setminus S_1.
\end{equation*}
Then, for all $(p,q)$ satisfying $\{\Re(p)=\Re(q)=0\}$, we can write 
\begin{equation*}
L(p,q) = L_1(p,q) + L_2(p,q),
\end{equation*}
where 
\begin{equation*} 
L_1(p,q) = \int_{S_1} e^{p z_1+ q z_2} \dd\Pi(z_1,z_2 ) \quad \text{and} \quad  
L_2(p,q) = \int_{S_2} e^{p z_1+ q z_2} \dd\Pi(z_1,z_2 ). 
\end{equation*}
Letting  $D\egaldef \{(p,q):\Re(p+q)\leq0\}$, $D_1 \egaldef \{\Re(p)\ge0\}$ and $D_2 \egaldef \{\Re(q)\ge0\}$, it is immediate to check that $L_1(p,q)$ and $L_2(p,q)$ have an analytic continuation in the respective domains 
\begin{equation*}
D_1  \cap D  \quad \text{and}\quad  
D_2  \cap D .
\end{equation*}
On the other hand, we proved in a previous work (see (4.1) and (4.2) in \cite{FaFrRa-21}) the relations
\begin{align}
 F(p,q) &= K(p,q)L_1(p,q)+u(p,q)A(p) , \label{eq:eqL1} \\
F(p,q) &= -K(p,q)L_2(p,q) - v(p,q)B(q), \label{eq:eqL2}
\end{align}
where $F(p,q)$ is meromorphic in the region $D$ introduced above.

We are now entitled to  make the meromorphic continuation of the right-hand side members of  \eqref{eq:eqL1} and \eqref{eq:eqL2} to the region $D$. In a second step, we can also make the meromorphic continuation of $u(p,q)A(p)$ and   $v(p,q)B(q)$ to the region $D\cap \{K(p,q)=0\}$, which leads to the relation
\begin{equation}
\label{eq:BVP}
   u(p,q) A(p) + v(p,q) B(q) =0, \quad \forall (p,q)\in D\cap \{K(p,q)=0\}.
\end{equation}
We remember that $A(p)$ and $B(q)$ are sought to be analytic, respectively for~$\Re(p)\geq0$ and~$\Re(q)\geq0$. Then, after a  now standard process (see, e.g., \cite{FIM-2017}), we can make the meromorphic continuation of $A(p)$ (resp.\ $B(q)$) to $\mathbb{C}\setminus (-\infty,p_1]$ (resp.\ $\mathbb{C}\setminus (-\infty,q_1]$), remarking that equation \eqref{eq:BVP} remains valid on these extended domains for $(p,q)\in\{K(p,q)=0\}$. A detailed version of this extension procedure is given in Proposition~\ref{prop:appendix} of the Appendix. Hence, we get from \eqref{eq:BVP}, for $q\in [q_2,\infty)$,
\begin{equation*}
\begin{cases}
u(P_1(q),q) A(p) + v(P_1(q),q) B(q) =0, \\[0.2cm]
u(P_2(q),q) A(p) + v(P_2(q),q) B(q) =0.
\end{cases}
\end{equation*}
Now, using the continuity of $B(q)$ when $q$ traverses the cut~$[q_2,\infty)$, we obtain directly the announced BVP~\eqref{eq:statement_BVP}.
The proof of the theorem is concluded.
\end{proof}

To express the solution of the  BVP~\eqref{eq:statement_BVP}, we need to introduce the following functions.  First, for~$x\in\mathbb{C}\setminus (-\infty,-1]$ and $a\in \mathbb{R}$, let 
\begin{equation*}
T_a(x) \egaldef   \cos \left(a \arccos x \right)
=\frac{1}{2}
\Bigl(\bigl(x+\sqrt{x^2-1}\bigr)^a+\bigl(x-\sqrt{x^2-1}\bigr)^a\Bigr).
\end{equation*}
Then, we define the function $w$, analytic on $\mathbb{C}\setminus (-\infty,p_1]$, by
\begin{equation}
\label{def:w}
   w(p)\egaldef T_{\frac{\pi}{\beta}}\left(\frac{2p-(p_1 +p_2)}{p_2 -p_1}\right),
\end{equation}
where
\begin{equation}
\label{def:beta}
   \beta \egaldef \arccos \left( -\frac{\rho}{\sqrt{\sigma_1\sigma_2}} \right) \in (0,\pi).
\end{equation}
The angle $\beta$ is equal to $2\pi-\zeta$, with $\zeta$ as in the introduction. This function $w$  is referred to as a \emph{conformal gluing function}, see \cite{LIT-1977,FIM-2017}. This name is justified by the following lemma.
\smallskip
\begin{lem} 
The function $w$ in \eqref{def:w} satisfies the following properties:
  \begin{enumerate}[label={\rm(\arabic*)},ref={\rm(\arabic*)}] 
  \item It is analytic in an open domain containing $\H_{+}^o$, namely $\mathbb{C}\setminus  (-\infty,p_1]$.
\item It is bijective from $\H^o_+$ to $\mathbb{C}\setminus (-\infty, -1]$.
  \item It satisfies the boundary condition
    \begin{equation*}
      w(p)=w(\overline{p}), \quad \forall p\in\H.
    \end{equation*}
  \end{enumerate}
\end{lem}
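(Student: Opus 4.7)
\emph{Approach.} The function $w$ is the composition $T_{\pi/\beta}\circ h$, where $h(p)=(2p-(p_1+p_2))/(p_2-p_1)$ is an affine bijection sending $p_1, p_2$ to $\mp 1$ and the slit $(-\infty,p_1]$ onto $(-\infty,-1]$. I would treat the three items in order using the classical conformal properties of the Tchebyshev-type function, as developed in \cite{LIT-1977,FIM-2017}.

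\medskip

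For item (1), I start from the alternative formula $T_a(x)=\frac12\bigl((x+\sqrt{x^2-1})^a+(x-\sqrt{x^2-1})^a\bigr)$. With the branch of $\sqrt{x^2-1}$ cut on $[-1,1]$, both $x\pm\sqrt{x^2-1}$ are holomorphic and nonvanishing on $\mathbb C\setminus[-1,1]$ (their product equals $1$), and they run over the negative real axis precisely when $x\in(-\infty,-1]$; hence their $a$-th powers are analytic on $\mathbb C\setminus(-\infty,-1]$. Across the segment $(-1,1)$ the two terms are complex conjugates of each other, so their sum extends analytically through $(-1,1)$. Composing with $h$ yields analyticity of $w$ on $\mathbb C\setminus(-\infty,p_1]$.

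\medskip

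For item (3), since $w$ has real Taylor coefficients at any real point of its domain, $w(\bar p)=\overline{w(p)}$, and the gluing identity reduces to $w(p)\in\mathbb R$ for $p\in\mathcal H$. I would parametrize $\mathcal H$ by the conjugate pair $P_1(q), P_2(q)=\overline{P_1(q)}$ as $q$ traverses the real cut from Lemma~\ref{lem:Pq}, and, using the kernel equation~\eqref{eq:Kroot}, the explicit form of $h$, and the identity $\cos\beta=-\rho/\sqrt{\sigma_1\sigma_2}$ from~\eqref{def:beta}, check that $\arccos(h(P_i(q)))$ has real part~$\beta$. Multiplying by~$\pi/\beta$ gives real part~$\pi$, on which $\cos$ takes real values (in fact values in $(-\infty,-1]$).

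\medskip

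Item (2) then follows from (1) and (3) by a standard argument: once $w$ is known to be analytic on a neighbourhood of $\overline{\mathcal H_+^o}\setminus(-\infty,p_1]$, continuous and real-valued on $\mathcal H$, and to induce a continuous proper map of $\mathcal H$ onto $(-\infty,-1]$ that identifies conjugate boundary points, an application of the argument principle to $w-w_0$ for $w_0\in\mathbb C\setminus(-\infty,-1]$ yields the claimed bijection $\mathcal H_+^o\to\mathbb C\setminus(-\infty,-1]$. The main obstacle is the boundary identification in item (3): checking that under $h$ the curve $\mathcal H$ is exactly the image under $\cos$ of the vertical line $\Re z=\beta$. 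This is the classical hyperbola-to-strip conformal gluing construction of \cite{LIT-1977,FIM-2017}, entirely analogous to the computation in the quarter-plane case~\cite{BaFa-87,FrRa-19}.
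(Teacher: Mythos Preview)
Your proposal is correct and follows precisely the standard route of the references the paper cites (the paper itself gives no proof, only the sentence ``The proof is analogous to the one found in \cite{BoElFrHaRa-21} or in \cite{FrRa-19}''): factor $w=T_{\pi/\beta}\circ h$, analyze the branches of $T_a$ for item~(1), identify $h(\mathcal H)$ with $\cos(\beta+i\mathbb R)$ for item~(3) --- this is exactly the uniformization $p(e^{i\omega})$ of the appendix, taking $\omega=\beta+it$ --- and deduce bijectivity. The only detail worth recording in your argument-principle step for item~(2) is the growth $w(p)\sim c\,p^{\pi/\beta}$ at infinity, needed because $\mathcal H_+^o$ is unbounded; the cited references handle this equivalently by composing the explicit conformal bijections $h$, $\arccos$, scaling by $\pi/\beta$, and $\cos$ directly.
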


The proof is analogous to the one found in \cite{BoElFrHaRa-21} or in \cite{FrRa-19}.
\medskip
\begin{thm}
\label{thm:main_result}
The function $A(p)$ in \eqref{eq:fourier} has the following integral explicit expression in the domain $\H_{+}^{o}$:
\begin{multline}
\label{eq:formula_solution}
   A(p)= \\\frac{\mu_1r_2-\mu_2}{1-r_1r_2} 
\left( \frac{w(0)-w(p_0)}{w(p)-w(p_0)} \right)^{-\chi}
\exp \left[ 
\int_{\H_\pm} \log \left( \frac{g(t)}{g(\bar{t})}\right) \left(\frac{w'(t)}{w(t)-w(p)}-\frac{w'(t)}{w(t)-w(0)}\right) \mathrm{d}t
\right],
\end{multline}
where
\begin{equation}
\label{eq:formula_index}
\chi=
\begin{cases}
0  & \text{if } u(P_i(q_2),q_2)\geqslant 0, \\[0.2cm]
-1  & \text{if } u(P_i(q_2),q_2)<0
\end{cases}
\end{equation}
is called the index,
and
\begin{equation}
\label{eq:formula_poles}
   p_0= \frac{2(\mu_2r_1 -\mu_1)}{\sigma_1+\sigma_2r_1^2-2\rho r_1}<0
\end{equation}
is the pole.
\end{thm}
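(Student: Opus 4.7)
The plan is to turn the homogeneous boundary relation \eqref{eq:statement_BVP} into a classical scalar Riemann--Hilbert problem, solve it via the Plemelj--Sokhotski formula after taking logarithms, and then pull the answer back through the conformal gluing function $w$ of \eqref{def:w}. Three items require care: the possible pole of $A$ at $p_0$ inside $\mathcal{H}_+^o$, the integer index $\chi$, and the multiplicative constant fixed at the origin.

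First I would identify the pole. Intersecting $u(p,q)=0$ with the kernel curve $\{K(p,q)=0\}$ amounts to substituting $q=-r_1 p$ into \eqref{eq:kernel}; the resulting quadratic in $p$ factors as $p$ times an affine factor that vanishes precisely at the $p_0$ of \eqref{eq:formula_poles}. Relation \eqref{eq:BVP} then forces $p_0$ to be the only possible pole of $A$ in the domain, and whether it is actually a pole depends on whether $p_0\in\mathcal{H}_+^o$; this is exactly what the index $\chi$ of \eqref{eq:formula_index} encodes (geometrically: the line $u=0$ meets the relevant arc of $\{K=0\}$, see Figure~\ref{fig:ellipse}). I would then absorb the pole and the winding of $g(\bar p)/g(p)$ in a single rational factor, setting
\[
\widetilde A(p) \egaldef \left(\frac{w(0)-w(p_0)}{w(p)-w(p_0)}\right)^{\chi} A(p).
\]
Because $w(p)=w(\bar p)$ for $p\in\mathcal H$, this factor preserves the boundary equation; for $\chi=-1$ it vanishes at $p_0$, cancelling the pole, and its normalisation ensures $\widetilde A(0)=A(0)$. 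The function $\widetilde A$ is then analytic and zero-free on $\mathcal{H}_+^o$ and still satisfies $g(p)\widetilde A(p) = g(\bar p)\widetilde A(\bar p)$ on $\mathcal H$.

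Taking the logarithm, which is now legitimate because $\widetilde A$ has no zeros and the winding has been removed, converts the multiplicative BVP into the additive jump relation
\[
\log \widetilde A(p) - \log \widetilde A(\bar p) = \log\frac{g(\bar p)}{g(p)}, \quad p\in\mathcal H.
\]
Composing with $w^{-1}$ pushes this problem onto the cut plane $\mathbb{C}\setminus(-\infty,-1]$, where it becomes a scalar Riemann problem on an arc to which Plemelj--Sokhotski applies directly. Integrating and returning to the $t$-plane through $\xi=w(t)$, $\mathrm{d}\xi=w'(t)\,\mathrm{d}t$, produces the kernel $w'(t)/(w(t)-w(p))$ appearing in \eqref{eq:formula_solution}; the subtracted term $-w'(t)/(w(t)-w(0))$ is the unique correction which makes the bracket vanish at $p=0$, thereby isolating the multiplicative constant. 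That constant is pinned down by evaluating at $p=0$: the rational prefactor and the exponential both equal $1$ there, so it must equal $A(0) = (\mu_1r_2-\mu_2)/(1-r_1r_2)$, which is precisely the value obtained from the mass-conservation identities recorded just after Proposition~\ref{prop:main_eqfunc}.

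The main obstacle I anticipate is the correct determination of $\chi$. This integer is the argument variation $\tfrac{1}{2\pi}\Delta_{\mathcal H}\arg(g(\bar p)/g(p))$ of the boundary coefficient along $\mathcal H$, and one has to show that it can take only the values $0$ and $-1$, the switch being governed exactly by the sign of $u(P_i(q_2),q_2)$ at the branch point. This demands a careful geometric analysis of the line $u=0$ against the arc of the kernel ellipse corresponding to $\mathcal H$, analogous to what is carried out in \cite{FrRa-19,BoElFrHaRa-21}. Once the index is fixed, uniqueness of the solution (Liouville applied to the ratio of any two candidate solutions, both bounded at infinity on $\mathcal{H}_+^o$ and sharing the same value at $0$) ensures that the Plemelj--Sokhotski formula indeed reproduces $A(p)$, completing the argument.
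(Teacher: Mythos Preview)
Your proposal is correct and follows essentially the same approach as the paper: the paper's own proof is little more than a pointer to Theorem~1 of \cite{FrRa-19}, noting that the BVP now sits on the right branch of the hyperbola and that the pole and index differ accordingly, together with a verification that $p_0<0$ from \eqref{eq:CNS2_ergodic} and \eqref{eq:elliptic}. Your plan unpacks exactly what that reference amounts to---identify $p_0$ from $u(p,-r_1p)=0$ on the kernel curve, factor out the pole via $w$, take logarithms, apply Plemelj--Sokhotski through the gluing function, and fix the constant at $p=0$ using the mass-conservation value of $A(0)$---so the two arguments coincide in substance.
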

\begin{proof}
The proof is similar to the one of  Theorem~1 in \cite{FrRa-19}. The main differences are that the BVP takes place on another branch of hyperbola, and the domain of the BVP is on the right of the hyperbola (not on the left as in the quarter plane), see Figure~\ref{fig:domain}. This leads to a formula that looks like the one of the quarter plane, but with several differences concerning for instance the pole and the index. This will be made more precise in Section~\ref{sec:diff}.

From \eqref{eq:formula_poles}, $p_0<0$, as follows from the next two inequalities:
\begin{itemize}
\item $\mu_2r_1 -\mu_1<0$, by \eqref{eq:CNS2_ergodic};
\item $\sigma_2r_1^2-2\rho r_1 +\sigma_1>0$, since, from the ellipticity condition \eqref{eq:elliptic}, this second degree  polynomial in $r_1$ is always positive.\qedhere
\end{itemize}
\end{proof}
When $\chi=0$, there is no pole inside the domain of the BVP. When $\chi=-1$, the ergodicity conditions imply $p_0<0$. This is consistent with the fact that $A(p)$ must be analytic for~$\Re(p) >0$. 

\section{Differences and similarities between $1/4$ and $3/4$-plane}
\label{sec:diff}
 
We intend to compare, as clearly as possible, the $1/4$ and $3/4$-plane situations, and we will frequently refer to Figures~\ref{fig:domain},~\ref{fig:ellipse} and \ref{fig:tableau}.

\subsubsection*{$\bullet$ Laplace and Fourier transforms:} While the Fourier transform of $\pi$ exists without any issue both in the quarter and three-quarter planes, there is a problem to define the Laplace transform in the three-quarter plane, due to the non-convex integration domain, which leads to the existence problem of $L(p,q)$. However, by a convenient continuation procedure, it is possible to define the Laplace transform in a neighborhood of $0$,  showing analytically that the stationary distribution decays exponentially in all directions. 

We can further remark that $A(p)$ and $B(q)$, the Laplace transforms~\eqref{eq:fourier} of the boundary invariant measures $\nu_1$ and $\nu_2$, are integrals defined on  different sets ($\mathbb{R}_-$ for the quarter plane, and $\mathbb{R}_+$ for the three-quarter plane).

\subsubsection*{$\bullet$ Kernel and reflections:} The algebraic formulas for the kernel $K$ in \eqref{eq:kernel}, the reflection polynomials $u,v$ in \eqref{eq:coef} and the functional equation \eqref{eq:eqfunc} are exactly the same ones in the two frameworks. On the other hand, the set of parameters for $u$ and $v$ are different (this is due to different ergodicity conditions in the quarter and three-quarter planes).

\subsubsection*{$\bullet$ Boundary value problem:} The boundary condition \eqref{eq:statement_BVP} has the same algebraic form in both cases; however, it is located on different branches of the same hyperbola, as represented on Figure~\ref{fig:domain}. This is due to the different initial convergence problem of $A$ and $B$.

\subsubsection*{$\bullet$ Solution of the above boundary value problems:} The formula \eqref{eq:formula_solution} for the solution has the same algebraic expression as its quadrant analogue in terms of the index, the function $w$, the pole $p_0$, etc. However, the values of the index $\chi$ in \eqref{eq:formula_index}, the pole \eqref{eq:formula_poles} and the function $w$ \eqref{def:w} are different. See Figures~\ref{fig:ellipse} and \ref{fig:tableau}. 

\medskip
These differences imply in turn that, for  given negative drift and covariance matrix, there might be no pole in the BVP domain $\H_{+}^{o}$, regardless of the reflection vectors. On the contrary, in the quadrant case, for given negative drift and covariance matrix, it is always possible to choose reflection vectors to ensure the existence of a pole in the BVP domain.

\begin{center}
\begin{figure}[h]
\begin{tabular}{|lc|c|c|}
\hline 
& & Three-quarter plane & Quarter plane \\ 
\hline 
Hyperbola & $\H$ & 
$
\begin{cases}
\H_p^+ & \text{if } \rho<0
\\
\H_p^- & \text{if } \rho>0
\end{cases}
$
 & 
$
\begin{cases}
\H_p^+ & \text{if } \rho>0
\\
\H_p^- & \text{if } \rho<0
\end{cases}
$
  \\ 
\hline 
BVP domain & $\H^{o}_\pm$ & $\H_{+}^{o}$ & $\H_{-}^{o}$
\\
\hline
Gluing function & $w$ & $T_{\frac{\pi}{\beta}}\left(+\frac{2p-(p_1 +p_2)}{p_2 -p_1}\right)$ & $T_{\frac{\pi}{\beta}}\left(-\frac{2p-(p_1 +p_2)}{p_2 -p_1}\right)$ \\ 
\hline 
Pole &$p_0$ & $\frac{2(\mu_2r_1 -\mu_1)}{\sigma_1+\sigma_2r_1^2-2\rho r_1}<0$ & $\frac{2(\mu_2r_1 -\mu_1)}{\sigma_1+\sigma_2r_1^2-2\rho r_1}>0$ \\ 
\hline 
Index & $\chi$ & $\begin{cases}
0  & \text{if } u(P_i(q_2),q_2)\geqslant 0
\\
-1  & \text{if } u(P_i(q_2),q_2)<0
\end{cases}$ & 
$
\begin{cases}
0  & \text{if } u(P_i(q_1),q_1)\leqslant 0
\\
-1  & \text{if } u(P_i(q_1),q_1)>0
\end{cases}
$
 \\ 
\hline 
\end{tabular} 
\caption{Differences between the formulas in the $1/4$-plane and in the $3/4$-plane.}
\label{fig:tableau}
\end{figure}
\end{center}

\subsubsection*{$\bullet$ Inverse Laplace transform:} In both cases, one can  invert the Laplace and Fourier transforms, using standard inversion formulas. For the three-quarter plane $S$, we consider a two-dimensional bilateral Laplace transform assuming that $\Pi$ is defined and equal to zero on the quarter plane complementary to $S$, i.e., $z_1\leq0$ and $z_2\leq0$. We have
\begin{equation*}
\begin{cases}
\DD \Pi(z_1,z_2)=\frac{1}{(2i\pi)^2}\int_{-i\infty}^{i\infty} \int_{-i\infty}^{i\infty} e^{-pz_1-qz_2} L(p,q) \mathrm{d}p \mathrm{d}q, \quad \forall (z_1,z_2)\in S,
\\[0.4cm]
\DD \nu_1(z_1)=\frac{1}{2i\pi}\int_{-i\infty}^{i\infty} e^{-pz_1} A(p) \mathrm{d}p, \quad \forall z_1<0,
\\[0.4cm]
\DD \nu_2(z_2)=\frac{1}{2i\pi}\int_{-i\infty}^{i\infty} e^{-qz_2} B(q) \mathrm{d}q, \quad \forall z_2<0.
\end{cases}
\end{equation*}

\appendix

\section{Analytic continuation of the Laplace transforms on the universal covering of the algebraic kernel curve}
\label{sec:continuation}

The goal of this appendix is to give more details about the analytic continuation of $A$ and $B$ done in the proof of Theorem~\ref{thm:BVP}. 
The proof, inspired from a standard analytic continuation process (see, e.g., \cite{FIM-2017}), uses the universal covering of the algebraic kernel curve. 
\begin{prop}
\label{prop:appendix}
The functions $A(p)$ (resp.\ $B(q)$) can be meromorphically continued to $\mathbb{C}\setminus (-\infty,p_1]$ 
(resp.\ $\mathbb{C}\setminus (\infty,q_1]$). Furthermore, we have 
\begin{equation*}
u(p,q) A(p)+v(p,q) B(q)=0,
\end{equation*}
for all $p\in\mathbb{C}\setminus (-\infty,p_1], q\in\mathbb{C}\setminus (-\infty,q_1]$, with
$\{K(p,q)=0\}$.
\end{prop}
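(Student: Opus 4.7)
The plan is to implement the classical continuation procedure along the genus-zero kernel curve, in the spirit of \cite[Ch.~3]{FIM-2017}. The starting point is the identity
\begin{equation*}
u(p,q)A(p) + v(p,q)B(q) = 0
\end{equation*}
valid on $D \cap \{K(p,q)=0\}$, where $D = \{\Re(p+q) \leq 0\}$; this was already established in the proof of Theorem~\ref{thm:BVP} by matching the two expressions \eqref{eq:eqL1} and \eqref{eq:eqL2} for the meromorphic function $F$ on the zero set of the kernel.

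Next I would use this relation to extend $A$ off the initial half-plane $\{\Re(p)\geq 0\}$, and symmetrically $B$ off $\{\Re(q)\geq 0\}$. For any $q\in\{\Re(q)\geq 0\}\setminus[q_2,\infty)$, Lemma~\ref{lem:Pq} gives $\Re(P_1(q))\leq 0\leq\Re(P_2(q))$, and I would set
\begin{equation*}
A(P_1(q)) \egaldef -\frac{v(P_1(q),q)}{u(P_1(q),q)}\,B(q),
\end{equation*}
which defines a meromorphic extension of $A$ to the image $P_1(\{\Re(q)\geq 0\}\setminus[q_2,\infty))$. Consistency with the original definition of $A$ on $\{\Re(p)\geq 0\}$ follows from the identity theorem: on the open piece of $\{K=0\}$ lying in $D$, both candidates for $A$ satisfy the same functional equation and hence agree. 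Swapping the roles of $p$ and $q$ gives the analogous extension of $B$ through $Q_1$. Iterating this procedure (extended $A$ feeds back into a larger domain for $B$, and so on) produces meromorphic continuations to progressively larger open sets.

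The main obstacle, and the reason the appendix is needed at all, is to show that the iterated extensions are single-valued and fill \emph{exactly} $\mathbb{C}\setminus(-\infty,p_1]$ (resp.\ $\mathbb{C}\setminus(-\infty,q_1]$). The clean way to argue this is to lift everything to the universal covering of the Riemann surface $\{K=0\}$: since $K$ has genus zero, this covering is simply connected and $A, B$ pull back to genuinely single-valued meromorphic functions on it, the relation $uA+vB=0$ being automatically preserved by analytic continuation. Projecting back to the $p$-plane, the only potential obstruction to single-valuedness is the monodromy around the branch cuts of $P_1, P_2$. On the right cut $[p_2,\infty)$ the two branches of $Q$ are complex conjugate and $A$ (already defined there from the initial half-plane) is continuous across, so there is no monodromy; by contrast, the left cut $(-\infty,p_1]$ is genuinely encountered as one tries to go further and is the boundary of the maximal domain of single-valuedness. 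The symmetric statement for $B$ is obtained by exchanging $(p,q)$ throughout. Finally, the relation $u(p,q)A(p)+v(p,q)B(q)=0$, valid on the starting piece $D\cap\{K=0\}$, propagates by analytic continuation to the whole of $\{K=0\}\cap\bigl((\mathbb{C}\setminus(-\infty,p_1])\times(\mathbb{C}\setminus(-\infty,q_1])\bigr)$, which is the statement of the proposition.
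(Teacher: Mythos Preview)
Your proposal is correct and in the same spirit as the paper's proof, but the organization differs in one notable way. You take the identity $uA+vB=0$ on $D\cap\{K=0\}$ as your starting point (citing \eqref{eq:BVP}) and then run the classical bootstrap, extending $A$ through $P_1$ using known values of $B$, then $B$ through $Q_1$, and so on; the universal covering appears in your argument only as an a posteriori device for checking single-valuedness. The paper instead works on the universal covering from the outset: it writes down the explicit rational uniformization of the genus-zero curve, lifts $A$, $B$ and the auxiliary function $F$ to functions $\hat A,\hat B,\hat F$ defined on three concrete domains $\hat D_1,\hat D_2,\hat D$, and makes visible the key geometric fact that $\hat D_1$ and $\hat D_2$ are \emph{disjoint} while $\hat D$ (the domain of $\hat F$) bridges the gap between them. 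In other words, the paper foregrounds the catalytic role of $F$ that in your write-up is hidden inside the derivation of \eqref{eq:BVP}. Your iteration argument is a legitimate alternative and is closer to the presentation in \cite{FIM-2017}; the paper's explicit uniformization buys a cleaner picture (Figure~\ref{fig:revetement}) of exactly which regions glue together and why the continuation stops at $(-\infty,p_1]$.
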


\begin{proof}
The idea of the proof is to use the third function $F$ defined on $D$ to fill the gap between the initial definition domains $D_1$ and $D_2$. A similar approach based on the universal covering has been recently carried out for random walks in \cite{EP-22}.
We start from the two equations (proved  in (4.1) and (4.2) of~\cite{FaFrRa-21})
\begin{equation}
\label{eq:sys1}
{K(p,q)L_1(p,q)+ u(p,q) A(p)}+ F(p,q)
= 0,
\end{equation}
\hspace{2cm} \emph{in the region} $D_1 \cap D = \{\Re{(p)}\geq0 \} \cap \{\Re{(p+q)}\leq0\}$;
\begin{equation}\label{eq:sys2}
{K(p,q)L_2(p,q)+ v(p,q) B(q)} -  F(p,q)
= 0, 
\end{equation}
\hspace{2cm} \emph{in the region} $D_2 \cap D = \{\Re{(q)}\geq0 \} \cap \{\Re{(p+q)}\leq0\}$.

Note that  $A$ is initially defined on the domain $D_1$, $B$ on $D_2$, $F$ on $D$, $L_1$ on $D_1 \cap D$ and $L_2$ on $D_2 \cap D$.
In addition, the so-called kernel $K(p,q)$ is the same for the two functional equations~\eqref{eq:sys1} and~\eqref{eq:sys2}. 

Then,  when $K(p,q)=0$, we have
\begin{equation*}
\begin{cases}
u(p,q) A(p)+  F(p,q) = 0 & \text{in } D_1\cap D,
\\
v(p,q) B(q) -  F(p,q) = 0 & \text{in } D_2\cap D.
\end{cases} 
\end{equation*}
We shall  use the universal covering of the Riemann surface, which was also studied in detail in the appendix of \cite{BoElFrHaRa-21}. All the functions can be lifted on the universal covering of the Riemann surface $K(p,q)=0$. Since this surface has genus $0$, it can be uniformized by means of rational functions. Here we can take
\begin{equation*}
\label{eq:uniformization}
\left\{
  \begin{array}{l}
         p(s) =\displaystyle \frac{p_1+p_2}{2}+\frac{p_2-p_1}{4}\left(s+\frac{1}{s}\right),\smallskip\\
         q(s) = \displaystyle\frac{q_1+q_2}{2}+\frac{q_2-q_1}{4}\left(\frac{s}{e^{i\beta}}+\frac{e^{i\beta}}{s}\right) ,
  \end{array}\right.
\end{equation*}
where $\beta$ is defined in \eqref{def:beta}. We have
\begin{equation*}
\{(p,q)\in (\mathbb{C}\cup \{ \infty \})^2:  K(p,q)=0 \}=\{ (p(s),q(s)) : s\in\mathbb{C}\cup  \{ \infty \} \}.
\end{equation*}
Then, by means of the mapping $e^{i\omega}$, we lift the functions onto the universal covering, setting
\begin{equation*}
\hat{A}(\omega) \egaldef A(p(e^{i\omega})), \quad 
\hat{B}(\omega) \egaldef B(q(e^{i\omega}))\quad \text{and} \quad \hat{F}(\omega)\egaldef F(p(e^{i\omega}),q(e^{i\omega})).
\end{equation*}
Analogously, we define $\hat{u}(\omega)$ and $\hat{v}(\omega)$.
We then have 
\begin{equation}
\label{eq:coupleeqrev}
\begin{cases}
\hat u(\omega) \hat A(\omega)+ \hat F(\omega) = 0 & \text{ in } \hat D_1\cap \hat D,
\\
\hat v(\omega) \hat B(\omega) -  \hat F(\omega) = 0 & \text{ in } \hat D_2\cap  \hat D,
\end{cases} 
\end{equation}
where we consider the lifted domains, see Figure~\ref{fig:revetement}:
\begin{align*}
   \hat{D}_1 &= \{\omega \in\mathbb{C}: \Re (p(e^{i\omega})) >0 \text{ and }  \pi< \Re(\omega) <3\pi \} ,\\
   \hat{D}_2 &= \{\omega \in\mathbb{C}: \Re (q(e^{i\omega})) >0 \text{ and }  -\pi+\beta< \Re (\omega) <\pi+\beta \},\\
\hat{D}&= \{\omega \in\mathbb{C}: \Re (p(e^{i\omega})+q(e^{i\omega})) <0 \text{ and }  0< \Re (\omega) <2\pi \}.
\end{align*}
The precise shape of $\hat{D}_1$ and $\hat{D}_2$, and the curves bounding these domains have been studied in \cite{BoElFrHaRa-21}. The equations of the curves bounding $\hat{D}$ can be found in the same way, and, for $\omega=x+iy$, with $ 0< \Re(\omega) <2\pi$, we obtain that $\omega \in \hat D$ if and only if
\begin{equation*}
   \sqrt{\sigma_2} \bigl(\cos(\theta) + \cos(x) \cosh(y) \bigr) + \sqrt{\sigma_1}\bigl( \cos(\beta - \theta) + \cos(x - \beta) \cosh(y)\bigr) = 0.
\end{equation*}
See the green curves in Figure~\ref{fig:revetement}. 

The functions $\hat A_1,\hat B,\hat F$  are  initially defined, respectively, in 
$\hat D_1,\hat D_2,\hat D$. Then, thanks to \eqref{eq:coupleeqrev}, we can continue meromorphically~$\hat A$ and $\hat B$ to the domain $\hat D$, remarking  that $\hat D_1$ and $\hat D_2$ have an empty intersection, the gap between these two sets being filled by~$\hat D$.  This phenomenon is illustrated in Figure~\ref{fig:revetement}.

Now, it is possible to sum up the two equations~\eqref{eq:coupleeqrev}, which leads to
\begin{equation*}
\hat u(\omega) \hat A(\omega)+\hat v(\omega) \hat B(\omega)=0, \quad \forall \omega \in \hat D.
\end{equation*}
 Hence, $\hat A$ and $\hat B$ can be meromorphically continued to the domain~$\hat D\cup\hat D_1\cup\hat D_2$, and finally, by a standard procedure, to the whole of the complex plane.

\begin{figure}[hbtp]
\centering
\includegraphics[trim = 1cm 0.5cm 1cm 0.5cm, clip,scale=1.9]{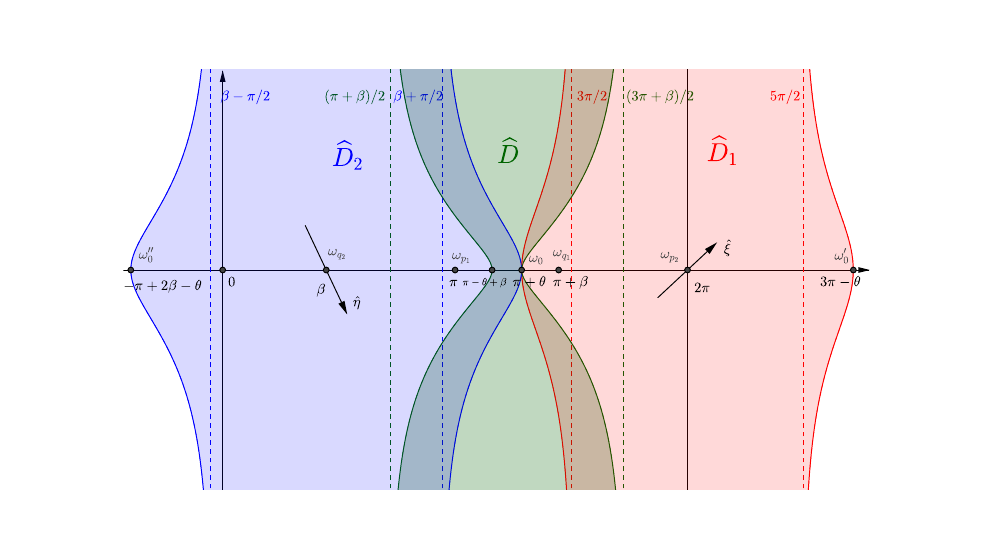}
\caption{Universal covering of the genus zero Riemann surface $\{(p,q): K(p,q)=0 \}$. In red, $\hat D_1= \{ \omega : \Re (p(\omega))>0 \}$ is the initial definition domain of $\hat A$ and in blue, $\hat D_2= \{ \omega : \Re (q(\omega))>0 \}$ is the initial definition domain of $\hat B$. The green domain $\hat D = \{ \omega : \Re (p(\omega)+q(\omega))<0 \}$ is the initial definition domain of the catalytic function $\hat F$. The domain $\hat D$ fills the gap between $\hat D_1$ and $\hat D_2$, thus allowing to continue the functions.}
\label{fig:revetement}
\end{figure}

Turning back to the initial functions, we can continue meromorphically $A(p)$ (resp.\ $B(q)$) to $\mathbb{C}\setminus (-\infty,p_1]$ 
(resp.\ $\mathbb{C}\setminus (\infty,q_1]$). We obtain 
\begin{equation*}
u(p,q) A(p)+v(p,q) B(q)=0 ,
\end{equation*}
for all $p\in \mathbb{C}\setminus (-\infty,p_1], q\in \mathbb{C}\setminus (-\infty,q_1]$ with 
$\{K(p,q)=0 \}$.
Note that one can reobtain the  fundamental ``Fourier'' functional equation~\eqref{eq:eqfunc}, just by putting
\begin{equation*}
L(p,q) \egaldef L_1(p,q)+L_2(p,q), \quad \forall (p,q)\in D,
\end{equation*}
which yields
\begin{equation*}
K(p,q)L(p,q)+u(p,q) A(p)+v(p,q) B(q)=0, \quad \forall (p,q)\in D.
\end{equation*}

\end{proof}

\bibliographystyle{abbrv}
\bibliography{biblio_FFR}

\end{document}